\documentclass[a4paper,11pt,reqno]{amsart}
\usepackage{amsmath,amssymb,amsfonts,amsthm}
\usepackage{latexsym,textcomp}
\usepackage{dsfont}
\usepackage[german, english]{babel}
\usepackage{hyperref}
%\usepackage{parskip}
%\setkomafont{sectioning}{\bfseries}
\usepackage{enumerate}
\usepackage[utf8]{inputenc}
%\usepackage{txfonts}

% \makeatletter
% %\newcommand{\leqnomode}{\tagsleft@true\let\veqno\@@leqno}
% \newcommand{\reqnomode}{\tagsleft@false\let\veqno\@@eqno}
% \makeatother

\selectlanguage{english}

% \documentclass[a4paper,11pt]{amsart}
% \usepackage{amsmath,amssymb,amsfonts,amsthm,exscale,calc}
% %\usepackage[latin1]{inputenc}
% \usepackage{latexsym,textcomp}
% \usepackage{graphicx,epsf}
% %\usepackage{fontspec}
% \usepackage{a4wide}
% \usepackage{dsfont}
% \usepackage[german, english]{babel}
% \usepackage{hyperref}
% \usepackage{enumerate}
% \usepackage{tikz}
% \usepackage[europeanresistors,americaninductors]{circuitikz}
% %\usepackage{parskip}
% \usepackage[utf8]{inputenc}
%
% \selectlanguage{english}

%\setlength{\parindent}{0mm}
%\setlength{\parskip}{5mm}

\newtheorem{lemma}{Lemma}[section]
\newtheorem{theorem}[lemma]{Theorem}

\newtheorem{proposition}[lemma]{Proposition}

\theoremstyle{definition}
\newtheorem{definition}[lemma]{Definition}
\newtheorem{example}[lemma]{Example}
\newtheorem*{remark}{Remark}

\numberwithin{equation}{section}
\newcommand{\comment}[1]{}

%Mathcal commands

%\newcommand{\cQ}{{\mathcal Q}}

\newcommand{\Z}{{\mathbb Z}}
\newcommand{\R}{{\mathbb R}}

\newcommand{\N}{{\mathbb N}}

\newcommand{\rhoooo}{\varrho}

\newcommand{\av}[1]{\left\Vert #1\right\Vert}

\newcommand{\Hm}[1]{\leavevmode{\marginpar{\tiny%
$\hbox to 0mm{\hspace*{-0.5mm}$\leftarrow$\hss}%
\vcenter{\vrule depth 0.1mm height 0.1mm width \the\marginparwidth}%
\hbox to 0mm{\hss$\rightarrow$\hspace*{-0.5mm}}$\\\relax\raggedright
#1}}}

\begin{document}

\title{Note on intrinsic metrics on graphs}

\author{Daniel Lenz}
\address{Institut für Mathematik, Friedrich-Schiller-Universität Jena, 07743 Jena, Germany.} \email{daniel.lenz@uni-jena.de}

\author{Marcel Schmidt}
\address{Mathematisches Institut, Universität Leipzig, 04109 Leipzig, Germany.} \email{marcel.schmidt@math.uni-leipzig.de}

\author{Felix Seifert}
\address{Institut für Mathematik, Friedrich-Schiller-Universität Jena, 07743 Jena, Germany.} \email{felix.seifert@uni-jena.de}

\maketitle

\begin{abstract}
We study the set of intrinsic metrics on a given graph. This is a
convex compact set and it carries a natural order. We investigate
existence of largest elements with respect to this order. We show
that the only locally finite graphs which admit a largest intrinsic
metric are certain finite star graphs. In particular all infinite locally finite
graphs do not admit a largest intrinsic metric.  Moreover, we give a
characterization for the existence of intrinsic metrics with finite
balls for weakly spherically symmetric graphs.
\end{abstract}

%\subjclass[2000]{Primary 39A12; Secondary 58J35}
%\date{\today}

%\tableofcontents

%%%%%%%%%%%%%%%%%%%%%%%%%%%%%%%%%%%%%%%%%%%%%%%%%%%%%%%%%
%%%%%%%%%%%%%%%%%%%%%%%%%%%%%%%%%%%%%%%%%%%%%%%%%%%%%%%%%

\section{Introduction}
Analysis of the Laplace-Beltrami operator on a Riemannian manifold
is intimately linked to the geometry of the manifold. Of particular
importance is the metric geometry with respect to the geodesic
distance, see e.g. \cite{Gri99} for a survey. In his seminal works
\cite{Stu1,Stu2,Stu3} Sturm realized that many theorems which hold
on Riemannian manifolds can also be established for operators
induced by  strongly local Dirichlet forms when one replaces the
geodesic distance by so-called intrinsic metrics. On such Dirichlet
spaces there is the notion of a measure valued gradient, which we
denote by $\Gamma$. On a Riemannian manifold it is given by
$\Gamma(f) = |\nabla f|^2 {\rm dvol}$. The characterizing property
of an intrinsic metric $\rhoooo$ is a Rademacher type result: If $f$ is
$1$-Lipschtiz with respect to $\rhoooo$, then $\Gamma(f) \leq 1$ (this
notation means that the Radon-Nikdoym derivative with respect to the
underlying measure is   $\leq 1$). It turns out that there is a
largest intrinsic metric $\kappa$, which is given by
\begin{align*}
 \kappa (x,y) &= \sup\{ |f(x) - f(y)| \mid f \text{ continuous with } \Gamma(f) \leq 1\} \\
 &= \sup\{\rhoooo(x,y) \mid \rhoooo \text{ intrinsic}\}. 
\end{align*}
In the Riemannian setting this metric coincides with the geodesic
distance.

One assumption that is usually needed in the analysis of the
corresponding operators is compactness of balls with respect to an
intrinsic metric. Since large metrics lead to smaller balls and in
view of the Riemannian case, it  therefore makes sense to view
$\kappa$ as the canonical intrinsic metric of the strongly local
Dirichlet space. In fact, the metric $\kappa$ is exactly the metric
that the  mentioned works of Sturm rely on.

In recent years there was an increased interest in analysis of
unbounded graph Laplacians and more general non-local operators. It
was noted that the combinatorial graph distance is not well suited
for many analytical problems on graphs and, therefore, metrics
adapted to the situation were introduced simultaneously by  several
authors, see \cite{Fol11,Fol14} for graphs specifically, and
\cite{MU11} and \cite{GHM12} for some  more general non-local
operators and  \cite{FLW14} for a treatment of arbitrary  regular
Dirichlet forms.  The setting of \cite{FLW14} includes the
Laplace-Beltrami operator, operators of strongly local Dirichlet
forms, graph Laplacians and more general non-local operators.
Intrinsic metrics have since been successfully applied to tackle
various analytical problems, see e.g. the survey \cite{Kel15} and
the more recent book \cite{KLW21} for a comprehensive overview in
the discrete case.

It was noted already in \cite{FLW14} that in non-local situations
the maximum of two intrinsic metrics need not be intrinsic, which
led to the observation that in general there is no largest intrinsic
metric. This is a notable difference between  the strongly local
situation and the situation of non-local operators. Now, the
considerations  in  \cite{FLW14} provide just one example of a graph
for which no maximal intrinsic metric exists.  So, they naturally
raise the  question wether examples of graphs admitting maximal
intrinsic metrics exist at all or whether the non-existence of a
maximal intrinsic metric is a - so to speak - characteristic feature
of the non-locality.

In this paper we give a complete answer to this question for locally
finite graphs. Such a graph possesses  a largest intrinsic metric if
and only if it is a  star graphs satisfying some condition on the
measure, see Theorem~\ref{thm:star graphs}. So,  if one assumes
local finiteness, only for very few specific finite graphs a largest
intrinsic metric exists. In particular,  largest intrinsic metrics
can not exist  for infinite locally finite graphs. In this sense
lack of a largest  intrinsic metric is a characteristic feature of
non-locality in the infinite situation.

For graphs that are not locally finite we do not have a
comprehensive answer because in this case non-trivial intrinsic
metrics need not exist, see Example~\ref{example:no intrinsic
metric}. We discuss some partial results after
Proposition~\ref{proposition:kappa is not intrinsic}.

As mentioned above, for analysis of the graph Laplacian it is
desirable to have intrinsic metrics with relatively small balls,
which means large intrinsic metrics. Even if there is no largest
intrinsic metric, one can still ask for maximal intrinsic metrics
(in the sense of maximal elements with respect to the pointwise
order on intrinsic metrics). It turns out that maximal intrinsic
metrics always exist due to some compactness of the set of all
intrinsic metrics, see Proposition~\ref{prop:existence maximal
metrics},  but they are not unique if there is no largest intrinsic
metric (i.e. in the locally finite situation the graph is not a star
graph), see Theorem~\ref{theorem:uniqueness of maximal intrinsic
metrics}.

Summarizing Theorem~\ref{thm:star graphs} and
Theorem~\ref{theorem:uniqueness of maximal intrinsic metrics} one
can say that -  up to a few specific exceptions given by certain
finite graphs - locally finite graphs do not admit canonical
intrinsic metrics.

The related question whether a given graph possesses an intrinsic
metric with finite balls is hard to answer in general. There are
only two abstract criteria in this direction: For locally finite
graphs and path metrics there is a Hopf-Rinow type result, namely a
path metric has finite balls if and only if it is complete, see
\cite[Appendix~A]{HKMW13}. Also on locally finite graphs it was
recently noted that the existence of an intrinsic metric with finite
balls is related to the existence of a sequence of cut-off
functions, see \cite[Appendix~A]{LSW21}. The latter property is also
known as $\chi$-completeness and was characterized in \cite{BGJ19}
for radially symmetric trees. In Section~\ref{section:existence of
metrics with finite balls} we extend this to weakly spherically
symmetric graphs, which were introduced in \cite{KLW13}. We can
characterize when intrinsic metrics with finite balls and an
additional symmetry conditions exist. Under some growth condition on
the measure, this symmetry condition on the metric can be dropped,
see Theorem~\ref{theorem:weakly spherically symmetric}. In
particular, we obtain a sharp criterion for the existence of
intrinsic metrics with finite balls on antitrees with polynomially
growing spheres, see Example~\ref{exa:antitrees}.

Section~3 grew out of the  Bachelor's thesis of the third named
author.

\textbf{Acknowledgements.} D.~L.  and M.~S. gratefully acknowledge partial support of German Research Foundation (DFG).

\section{Set up and (non) existence of nontrivial intrinsic metrics}

Let $X \neq \emptyset$ be a countable set equipped with the discrete
topology. Hence, any function on $X$ is continuous, a subset of $X$
is compact if and only if it is finite and the $\sigma$-algebra
generated from the open sets consists just of all subsets. The space
of bounded real-valued functions on $X$ is denoted by
$\ell^\infty(X)$ and we write $C(X)$ for all real-valued functions
on $X$. Due to $X$ being discrete the support of $f \in C(X)$ is
given by ${\rm supp} f = \{x \in X \mid f(x) \neq 0\}$ and we write
$C_c(X)$ for all real valued functions of finite support. We write
$s\wedge t$ and $s\vee t$ for the minimum and maximum of the real
numbers $s$ and $t$ respectively. We  extend this to functions $f,g
\in C(X)$ pointwisely.

Let $m \colon X \to (0,\infty)$. We abuse notation and also view $m$
as a measure on all subsets of $X$ by
$$m(A) = \sum_{x \in A} m(x), \quad A \subseteq X. $$
%
% Let $1 \leq p < \infty$. The discrete Lebesgue spaces are given by
% %
% $$\ell^p(X,m) = \{f \colon X \to \R \mid \sum_{x \in X} |f(x)|^p m(x) < \infty\}$$
% %
% equipped with the norm
% %
% $$\av{f}_p = \left(\sum_{x \in X} |f(x)|^p m(x)\right)^{1/p}.$$
%

A {\em weighted graph} on $X$ is a symmetric function $b \colon X \times X \to [0,\infty)$ such that $b(x,x) = 0$ for all $x \in X$ and such that the {\em weighted vertex degree} satisfies
$$\deg(x) := \sum_{y \in X} b(x,y) < \infty$$
for each $x \in X$.
% For $\emptyset \neq B \subseteq X$ and $x \in X$ we let
% %
% $$\deg_B(x) := \sum_{y \in B} b(x,y).$$
%

Two points $x,y \in X$ are said to be {\em connected by an edge} if  $b(x,y) > 0$. In this case, we write $x \sim y$. The graph $b$ is called {\em locally finite} if for each $x \in X$ the set of its neighbors
$$  \{y \in X \mid x \sim y\}$$
is finite. In this case, we write $N(x) := |\{y \in X \mid x \sim y\}|$. A {\em path}  is a finite  sequence $\gamma = (x_0,\ldots,x_n)$ in $X$ with  $x_0 \sim x_1 \sim \ldots \sim x_n $. We say it {\em connects} $x$ and $y$ if they are contained in the path.  The graph $b$ is called {\em connected} if all $x,y \in X$ are connected by a path.

A {\em pseudo metric} on $X$ is a symmetric function $\sigma \colon X \times X \to [0,\infty)$ satisfying the triangle inequality. It is called {\em intrinsic} with respect to $b$ and $m$ if
$$\frac{1}{m(x)} \sum_{y \in X} b(x,y) \sigma(x,y)^2 \leq 1$$
for every $x \in  X$. We write $\mathfrak M = \mathfrak M_{b,m}$ for the set of all intrinsic pseudo metrics with respect to $b$ and $m$.

In applications the most important feature of intrinsic pseudo metrics is the following Rademacher type result: If $\sigma$ is an intrinsic pseudo metric and $f \colon X \to \R$ is $L$-Lipschitz with respect to $\sigma$ (i.e. $|f(x) - f(y)| \leq L \sigma(x,y)$ for all  $x,y \in X$), then
$$|\nabla f| := \left( \frac{1}{m(x)} \sum_{y \in X} b(x,y) |f(x) -f(y)|^2\right)^{1/2} \leq L.$$

 Clearly, the trivial pseudo metric $\sigma = 0$ is always an intrinsic pseudo metric and so the first question is whether non-trivial intrinsic metrics always exist. The following example provides a positive answer for locally finite (connected) graphs.

 \begin{example}[Path pseudo metrics] \label{example:path metrics}
 Assume the graph $b$ is connected and let $w \colon X \times X \to [0,\infty)$. To a path $\gamma = (x_0,\ldots,x_n)$ we associate its {\em length} (with respect to $w$) by
 $$L_w(\gamma) = \sum_{k = 1}^n w(x_{k-1},x_k)\wedge w(x_{k},x_{k-1}).$$
 The function
 $$d_w \colon X \times X \to [0,\infty), \, d_w(x,y) = \inf \{L_w(\gamma) \mid \gamma \text{ path connecting }x \text{ and }y\}$$
 is called the {\em path pseudo metric} induced by the weight  $w$.
It is readily verified that $d_w$ is a pseudo metric. Note that
$d_w$ only depends on the values  $w(x,y)$ with $x \sim y$. Hence,
in some examples below we only specify the values of such $w$ on
neighbors.  Note also that we can assume without loss of generality
that $w$ is symmetric (i.e. satisfies $w(x,y) = w(y,x)$ for all
$x,y\in X$ with $x\sim y$) as we could otherwise replace it by the
symmetric $w(x,y) \wedge w(y,x)$.

Path pseudo metrics have the following characteristic feature.

\begin{proposition}[Characterisation of path pseudo metrics]\label{char-path}  Let $(X,b)$ be a connected graph. Let $\varrho$ be a
pseudo metric on $X$.

(a) If  $\varrho$ is a path pseudo metric induced by the  weight $w$,
then any pseudometric $\sigma$ with $\sigma(x,y)\leq w(x,y)$ for all
$x,y\in X$ with $x\sim y$ satisfies $\sigma \leq \varrho$.

(b) The pseudometric $\varrho$ is a path pseudo metric if and only if
any pseudometric $\sigma$ with $\sigma(x,y)\leq \varrho(x,y)$ for all
$x,y\in X$ with $x\sim y$ satisfies $\sigma \leq \varrho$.

\end{proposition}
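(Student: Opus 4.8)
The plan is to prove (a) directly from the triangle inequality and then to deduce both directions of (b) by applying (a) to suitably chosen data.

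For part (a), I would fix $x,y \in X$ and, using connectedness, pick an arbitrary path $\gamma = (x_0,\ldots,x_n)$ connecting $x = x_0$ and $y = x_n$. Iterating the triangle inequality for $\sigma$ gives $\sigma(x,y) \leq \sum_{k=1}^n \sigma(x_{k-1},x_k)$. Since consecutive vertices of a path are neighbours, the hypothesis $\sigma(u,v) \leq w(u,v)$ for $u \sim v$ together with the symmetry of $\sigma$ yields $\sigma(x_{k-1},x_k) \leq w(x_{k-1},x_k) \wedge w(x_k,x_{k-1})$ for each $k$. Summing, $\sigma(x,y) \leq L_w(\gamma)$, and taking the infimum over all connecting paths $\gamma$ gives $\sigma(x,y) \leq d_w(x,y) = \varrho(x,y)$. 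This is all of (a).

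For the forward implication of (b), suppose $\varrho = d_w$ for some weight $w$. For an edge $x \sim y$ the single-edge path $(x,y)$ shows $\varrho(x,y) \leq w(x,y) \wedge w(y,x) \leq w(x,y)$. Hence any pseudometric $\sigma$ with $\sigma(x,y) \leq \varrho(x,y)$ on edges also satisfies $\sigma(x,y) \leq w(x,y)$ on edges, and (a) immediately yields $\sigma \leq \varrho$. For the reverse implication, the idea is to feed $\varrho$ back into itself as a weight: set $w := \varrho$ and consider the induced path pseudo metric $\widetilde{\varrho} := d_\varrho$. On one hand, iterating the triangle inequality for $\varrho$ shows $L_\varrho(\gamma) \geq \varrho(x,y)$ for every path $\gamma$ connecting $x$ and $y$, whence $\widetilde{\varrho} \geq \varrho$. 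On the other hand, the single-edge path shows $\widetilde{\varrho}(x,y) \leq \varrho(x,y)$ whenever $x \sim y$; since $\widetilde{\varrho}$ is itself a pseudometric, applying the standing hypothesis with $\sigma = \widetilde{\varrho}$ forces $\widetilde{\varrho} \leq \varrho$. Combining the two inequalities gives $\varrho = \widetilde{\varrho} = d_\varrho$, so $\varrho$ is a path pseudo metric.

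The whole argument is essentially careful bookkeeping of the triangle inequality and presents no serious obstacle. The one point requiring genuine care is the reverse direction of (b): one must recognise that $\varrho$ should serve as its own inducing weight, and then notice that the two opposite inequalities $\varrho \leq \widetilde{\varrho}$ and $\widetilde{\varrho} \leq \varrho$ come from entirely different facts — the triangle inequality for the former, and the hypothesis of (b) for the latter.
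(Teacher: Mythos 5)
Your proof is correct and follows essentially the same route as the paper: part (a) by iterating the triangle inequality along a path and taking the infimum, and the reverse direction of (b) by feeding $\varrho$ back into itself as a weight and combining $d_\varrho \geq \varrho$ (triangle inequality) with $d_\varrho \leq \varrho$ (the hypothesis applied to $\sigma = d_\varrho$). Nothing further is needed.
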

\begin{proof}
(a) Assume without loss of generality that $w$ is symmetric. Let
$x,y\in X$ be given and $(x_0,\ldots, x_n)$ a path connecting $x$
and $y$. Then, the triangle inequality (for $\sigma$)  gives
$$\sigma(x,y)\leq \sum_{k=1}^n \sigma(x_{k-1},x_k) \leq \sum_{k=1}^n
w(x_{k-1},x_k).$$ Taking the infimum over all path connecting $x$
and $y$ we find $\sigma \leq \varrho$.

(b) From (a) we easily infer the 'only if' part. To show the 'if'
part assume that $\varrho$ has the feature that  any pseudometric
$\sigma$ with $\sigma(x,y)\leq\varrho(x,y)$ for all $x,y\in X$ with
$x\sim y$ satisfies $\sigma \leq \varrho$. Consider now the pseudo
path metric $\sigma:=d_\varrho$. Then, clearly $\sigma(x,y)\leq
\varrho (x,y)$ holds for all $x,y\in X$ with $x\sim y$. So, we infer
$$\sigma \leq \varrho.$$ Moreover, as $\varrho$ is a pseudo metric we
can apply the triangle inequality to $\varrho$ to find for any path
$(x_0,\ldots, x_n)$ connecting $x$ and $y$ that
$$\varrho(x,y)\leq \sum_{k=1}^n \varrho (x_{k-1},x_k)$$
holds. Taking the infimum over all path we obtain
$$\varrho \leq d_\varrho = \sigma.$$
Putting this together we find that  $\varrho = d_\varrho$ is a path
metric.
\end{proof}

If the graph is locally finite and $w(x,y) > 0$ for all $x \sim y$,
then $d_w$ is even  a metric inducing the discrete topology on $X$.
For $w = 1$ the metric $d:= d_1$ is called {\em combinatorial
distance}, as it counts the least number of edges in a path
connecting $x,y$.

The path pseudo metric $d_w$ satisfies $d_w(x,y) \leq w(x,y) \wedge
w(y,x)$ for $x \sim y$. Hence, it is intrinsic if
 $$\frac{1}{m(x)} \sum_{y \in X} b(x,y) w(x,y)^2 \leq 1.$$
 One function $w$ satisfying this inequality is given by
 $$w(x,y) = \frac{m(x)^{1/2}}{\deg(x)^{1/2}} \wedge  \frac{m(y)^{1/2}}{\deg(y)^{1/2}}.  $$
This provides a concrete example of an intrinsic pseudo metric that
is a path pseudo  metric.

The combinatorial distance $d$ satisfies $d(x,y) = 1$ for $x \sim
y$. Hence, it is intrinsic if and only if $ \deg/m$ is bounded by
$1$.
 \end{example}

Another class of intrinsic pseudo metrics is induced by functions on
$X$. This is discussed next.

\begin{example}[Intrinsic metrics induced by functions]\label{example:intrisic metrics from functions}
 Let $f \colon X \to \mathbb R$. Then
 $$\sigma_f \colon X \times X \to [0,\infty), \quad \sigma_f(x,y) = |f(x) - f(y)|$$
 is obviously a pseudo metric on $X$. It is intrinsic if and only if $|\nabla f| \leq 1$. Moreover, it is a metric if and only if $f$ is injective.
\end{example}

The following lemma introduces a metric which dominates all
intrinsic pseudo metrics. It leads to examples of graphs that do not
possesses any nontrivial intrinsic pseudo metric and to compactness
of the set of all intrinsic pseudo metrics $\mathfrak M$ with
respect to pointwise convergence.

\begin{lemma}\label{lemma:universal bound}
Let $(X,b)$ be a graph and consider $s \colon X \times X \to [0,\infty)$ with
$$s(x,y) = \frac{m(y)^{1/2}}{b(x,y)^{1/2}}$$
for $x \sim y.$ Then any intrinsic pseudo metric $\sigma$ satisfies
$\sigma \leq d_s$.  In particular, $\sigma(x,y) \leq s(x,y)$ for $x
\sim y$.
\end{lemma}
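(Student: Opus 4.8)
The plan is to reduce the global inequality $\sigma \leq d_s$ to the pointwise bound $\sigma(x,y) \leq s(x,y)$ on neighbours, and then to invoke Proposition~\ref{char-path}(a). Indeed, once we know that every intrinsic pseudo metric $\sigma$ satisfies $\sigma(x,y) \leq s(x,y)$ for all $x,y \in X$ with $x \sim y$, part~(a) of that proposition, applied with the weight $w = s$ whose induced path pseudo metric is $d_s$, immediately yields $\sigma \leq d_s$. This also covers the final assertion, since the single-edge path gives $d_s(x,y) \leq s(x,y)$ for $x \sim y$, and hence $\sigma(x,y) \leq d_s(x,y) \leq s(x,y)$.

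So the heart of the matter is the neighbour bound. Fix $x \sim y$ and recall that $\sigma$ being intrinsic means
$$\frac{1}{m(z)} \sum_{u \in X} b(z,u)\, \sigma(z,u)^2 \leq 1$$
for every $z \in X$. The key observation is that all summands are nonnegative, so we may discard all but a single term. Applying the inequality at the vertex $z = y$ and keeping only the summand $u = x$ gives
$$\frac{1}{m(y)}\, b(y,x)\, \sigma(y,x)^2 \leq 1.$$
Using the symmetry $b(y,x) = b(x,y)$ of the graph and $\sigma(y,x) = \sigma(x,y)$ of the pseudo metric and rearranging, this is exactly $\sigma(x,y)^2 \leq m(y)/b(x,y)$, that is $\sigma(x,y) \leq m(y)^{1/2}/b(x,y)^{1/2} = s(x,y)$.

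I expect no serious obstacle here; the only point requiring care is the asymmetry of $s$ in its two arguments. One must apply the intrinsic inequality at the correct vertex, namely $y$ rather than $x$, in order to produce $m(y)$ in the numerator and match the definition $s(x,y) = m(y)^{1/2}/b(x,y)^{1/2}$; evaluating at $x$ would instead give the weaker-looking bound $\sigma(x,y) \leq s(y,x)$. Combining both choices in fact shows $\sigma(x,y) \leq s(x,y) \wedge s(y,x)$, but the single estimate $\sigma(x,y) \leq s(x,y)$ already suffices to feed into Proposition~\ref{char-path}(a) and conclude.
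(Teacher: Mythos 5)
Your argument is correct and essentially identical to the paper's: both establish the neighbour bound $\sigma(x,y)\leq s(x,y)$ by applying the intrinsic inequality at a single vertex and discarding all but one nonnegative summand (the paper evaluates at $x$ and then uses symmetry of $b$ and $\sigma$, you evaluate directly at $y$ --- the same computation up to relabelling), and both then conclude $\sigma\leq d_s$ via Proposition~\ref{char-path}(a). Your closing remark on the asymmetry of $s$ matches the paper's use of $s(z,x)$ versus $s(x,z)$, so there is nothing to add.
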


\begin{proof} Let $\sigma$ be intrinsic.  For $z \sim x$ the inequality $\sum_{y
\in X} b(x,y) \sigma(x,y)^2 \leq m(x)$  implies
$$\sigma(x,z) \leq \frac{m(x)^{1/2}}{b(x,z)^{1/2}}= s(z,x)$$
where we use the symmetry of $b$ to obtain the last equality. As
$\sigma$ is a metric this gives
$$\sigma (x,y) = \sigma (y,x) \leq s(x,y)$$
whenever $x\sim y$ holds.  From (a) of Proposition~\ref{char-path} we then infer $\sigma\leq d_s$.
\end{proof}

\begin{example}[Graphs without nontrivial intrinsic pseudo metric] \label{example:no intrinsic metric}
Assume
\begin{equation*}
\inf_{z \in X} \left(\frac{m(z)^{1/2}}{b(x,z)^{1/2}}+\frac{m(z)^{1/2}}{b(y,z)^{1/2}} \right)=0
\end{equation*}
for all $x,y \in X$ (here we use the convention $m(z)^{1/2}/b(x,z)^{1/2} = \infty$ if $x \not \sim z$). Then the triangle inequality and the previous lemma imply that there is no nontrivial intrinsic pseudo metric with respect to $b$ and $m$. In this case, Example~\ref{example:intrisic metrics from functions} shows that any $f \in C(X)$ with $|\nabla f| \in \ell^\infty$ is constant, which is a quite remarkable property.

We now explicitly construct such a graph. We let $X= \N$,
\begin{equation*}
b \colon \N \times \N \to [0,\infty), \quad b(i,j) = \begin{cases}
          \frac{1}{i^2+j^2} &\text{if } i \neq j\\
          0 &\text{if } i = j
         \end{cases}
\end{equation*}
 and
\begin{equation*}
m \colon \N \to (0,\infty), \quad m(i)=\frac{1}{i^3}.
\end{equation*}
It is clear that $b$ is symmetric and satisfies the summability condition. Moreover,
\begin{equation*}
\frac{m(k)^{1/2}}{b(i,k)^{1/2}}+ \frac{m(k)^{1/2}}{b(j,k)^{1/2}}=\frac{(i^2+k^2)^{1/2}}{k^{3/2}}+ \frac{(j^2+k^2)^{1/2}}{k^{3/2}} \rightarrow 0, \quad k\rightarrow \infty,
\end{equation*}
which shows that $(\N,b)$ has no nontrivial intrinsic metric with respect to $m$.
\end{example}

\begin{remark}
Non-existence of nontrivial intrinsic pseudo metrics is not a special feature of graphs or non-local operators. Indeed, for certain strongly local Dirichlet forms on fractals the measures $\Gamma(f)$ mentioned in the introduction are always singular to the underlying measure and hence the only intrinsic pseudo metric is the trivial one, see e.g. \cite{Hin05}.  
\end{remark}

\begin{proposition} \label{proposition:compactness}
Let $b$ be a graph over $(X,m)$. Then $\mathfrak M$ is a compact
convex subset of $C(X \times X)$ equipped with the topology of
pointwise convergence.
\end{proposition}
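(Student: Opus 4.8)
The plan is to verify convexity directly and to obtain compactness by realizing $\mathfrak{M}$ as a closed subset of a compact product, using Lemma~\ref{lemma:universal bound} to control the size of its elements.

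For convexity, I would fix intrinsic pseudo metrics $\sigma,\tau$ and $\lambda\in[0,1]$ and check that $\lambda\sigma+(1-\lambda)\tau$ again lies in $\mathfrak{M}$. Symmetry, nonnegativity and the triangle inequality are all preserved under convex combinations, so $\lambda\sigma+(1-\lambda)\tau$ is again a pseudo metric. For the intrinsic inequality I would invoke convexity of $t\mapsto t^2$, namely $(\lambda\sigma(x,y)+(1-\lambda)\tau(x,y))^2\le \lambda\sigma(x,y)^2+(1-\lambda)\tau(x,y)^2$, and sum this against $b(x,y)/m(x)$ over $y$; the two resulting terms are each $\le 1$, which gives the claim. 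Equivalently, for fixed $x$ the map $g\mapsto (\frac{1}{m(x)}\sum_y b(x,y)g(y)^2)^{1/2}$ is a seminorm, so each condition $\{\sigma : \frac{1}{m(x)}\sum_y b(x,y)\sigma(x,y)^2\le 1\}$ is convex, and $\mathfrak{M}$ is an intersection of such convex sets with the convex set of all pseudo metrics.

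For compactness, recall that $C(X\times X)=\R^{X\times X}$ equipped with pointwise convergence is exactly the product topology. By Lemma~\ref{lemma:universal bound}, every $\sigma\in\mathfrak{M}$ satisfies $0\le \sigma(x,y)\le d_s(x,y)$ for all $x,y$, so $\mathfrak{M}\subseteq \prod_{(x,y)}[0,d_s(x,y)]$, a product of compact intervals (the values $d_s(x,y)$ being finite, e.g.\ when the graph is connected). By Tychonoff's theorem this product is compact, and it therefore suffices to show that $\mathfrak{M}$ is closed in it.

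Closedness is the one step that needs care. Suppose $\sigma_n\to\sigma$ pointwise with $\sigma_n\in\mathfrak{M}$. Symmetry, nonnegativity and each single instance of the triangle inequality involve only finitely many coordinates and pass to the limit immediately, so $\sigma$ is again a pseudo metric. The intrinsic condition is the delicate point, since it is an infinite sum over $y$ and pointwise convergence alone need not preserve it. Here I would apply Fatou's lemma: for fixed $x$ the nonnegative terms $b(x,y)\sigma_n(x,y)^2$ converge pointwise in $y$ to $b(x,y)\sigma(x,y)^2$, whence $\sum_y b(x,y)\sigma(x,y)^2\le \liminf_n \sum_y b(x,y)\sigma_n(x,y)^2\le m(x)$, so $\sigma$ is intrinsic. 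Thus $\mathfrak{M}$ is a closed subset of a compact set and hence compact. The main obstacle is precisely this interchange of limit and infinite sum, which Fatou's lemma resolves.
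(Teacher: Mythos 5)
Your proposal is correct and follows essentially the same route as the paper: convexity via convexity of $t\mapsto t^2$, compactness by embedding $\mathfrak M$ into the "box" $\{f : 0\le f\le d_s\}$ bounded by Lemma~\ref{lemma:universal bound}, and closedness via Fatou's lemma for the intrinsic condition. The only cosmetic difference is that you invoke Tychonoff's theorem where the paper uses metrizability of pointwise convergence on the countable set $X\times X$ and a diagonal sequence argument.
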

\begin{proof}
The convexity  of $\mathfrak M$ follows directly from the the
definition of intrinsic pseudo metrics and the convexity of the
function $[0,\infty) \to [0,\infty), \, x \mapsto x^2$.

Since $X \times X$ is at most countable, the topology of pointwise
convergence on $C(X \times X)$ is metrizable. Let $d_s$ be the
metric introduced in Lemma~\ref{lemma:universal bound}.  By a
standard diagonal sequence argument the set
$$\{f \in C(X \times X) \mid 0 \leq f \leq d_s\} $$
is compact in $C(X \times X)$ with respect to pointwise convergence. By Lemma~\ref{lemma:universal bound} it contains $\mathfrak M$ and so it suffices to show that $\mathfrak M$ is closed.  Let $(\sigma_n)$ in $\mathfrak M$ and $\sigma \in C(X \times X)$ with $\sigma_n \to  \sigma$ pointwise. The pointwise limit of pseudo metrics is clearly a pseudo metric and Fatou's lemma yields
$$\sum_{y \in X} b(x,y) \sigma(x,y)^2 \leq \liminf_{n \to \infty} \sum_{y \in X} b(x,y) \sigma_n(x,y)^2 \leq m(x) $$
for all $x \in X$. Hence, $\sigma  \in \mathfrak M$.
\end{proof}

Clearly, the set $C(X\times X)$ carries a natural order $\leq$ with
$F\leq G$ if $F(x,y)\leq G(x,y)$ for all $x,y\in X$. This order
induces an order on $\mathfrak M$ and we look for largest elements
with respect to this order. This is the topic of the next section.

\section{Maximal intrinsic metrics}
As mentioned in the introduction intrinsic pseudo metrics with small
balls are useful in applications. Small balls correspond to large
pseudo metrics. Hence,  we study largest  intrinsic pseudo metrics
in this section. Here, largest  refers to the natural order
structure on the set of intrinsic metrics induced by pointwise
comparison (compare discussion at the end of previous section).

One candidate for a largest  intrinsic pseudo metric is the
pointwise sup of all intrinsic metrics. For connected locally finite
graphs it turns out that only on special star graphs this leads to
an intrinsic metric (Theorem \ref{thm:star graphs}).

In the quest for largest intrinsic metrics one may also turn  to
maximal intrinsic metrics (i.e. intrinsic metrics $\sigma$ which
agree with any intrinsic metric $\varrho$ with $\sigma \leq
\varrho$). Such maximal intrinsic metrics can be shown to exist on
any graph with the help of Zorn's lemma (Proposition
\ref{prop:existence maximal metrics}). However, except in the case
of the special star graphs already mentioned, there will exist more
than one such maximal intrinsic metric (Theorem
\ref{theorem:uniqueness of maximal intrinsic metrics}). So, there is
a lack of uniqueness.

Let $b$ be a graph over $(X,m)$. From  Lemma~\ref{lemma:universal bound}  we infer that the function
$$\kappa = \kappa_{b,m}  = \sup \{ \sigma \mid \sigma \in \mathfrak M_{b,m}\}$$
is finite with $\kappa \leq d_s$. As a pointwise supremum of pseudo metrics it is a pseudo metric itself. We call it the {\em canonical pseudo metric} of $b$ over $(X,m)$.  If the graph is locally finite, there are intrinsic metrics (cf. Example~\ref{example:path metrics}) and so $\kappa$ is indeed a metric in this case.

\begin{proposition}
Let $b$ be a graph over $(X,m)$. For $x,y \in X$ we have
$$\kappa(x,y) = \sup\{|f(x) - f(y)| \mid f \in C(X) \text{ with }´ |\nabla f| \leq 1\}.$$
\end{proposition}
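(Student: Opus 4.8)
The plan is to denote the right-hand side by $K(x,y) := \sup\{|f(x)-f(y)| \mid f \in C(X) \text{ with } |\nabla f| \leq 1\}$ and to prove the two inequalities $K(x,y) \leq \kappa(x,y)$ and $\kappa(x,y) \leq K(x,y)$ separately; both rely directly on results already established above.

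For the inequality $K \leq \kappa$, I would fix $f \in C(X)$ with $|\nabla f| \leq 1$. By Example~\ref{example:intrisic metrics from functions} the induced pseudo metric $\sigma_f(u,v) = |f(u) - f(v)|$ is intrinsic, so $\sigma_f \in \mathfrak M_{b,m}$. Since $\kappa$ is by definition the pointwise supremum over all elements of $\mathfrak M_{b,m}$, this yields $|f(x) - f(y)| = \sigma_f(x,y) \leq \kappa(x,y)$. Taking the supremum over all admissible $f$ gives $K(x,y) \leq \kappa(x,y)$.

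For the reverse inequality $\kappa \leq K$, I would fix an intrinsic pseudo metric $\sigma \in \mathfrak M_{b,m}$ and a base point $x$, and consider the function $f_x := \sigma(x,\cdot)$, which lies in $C(X)$ as $\sigma$ is $[0,\infty)$-valued. The triangle inequality for $\sigma$ gives $|f_x(u) - f_x(v)| = |\sigma(x,u) - \sigma(x,v)| \leq \sigma(u,v)$ for all $u,v \in X$, so $f_x$ is $1$-Lipschitz with respect to $\sigma$. The Rademacher-type estimate recorded after the definition of intrinsic pseudo metrics (applied with $L = 1$) then yields $|\nabla f_x| \leq 1$, so $f_x$ is admissible for $K$. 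As $f_x(x) = 0$ and $f_x(y) = \sigma(x,y)$, we obtain $\sigma(x,y) = |f_x(x) - f_x(y)| \leq K(x,y)$, and taking the supremum over $\sigma \in \mathfrak M_{b,m}$ gives $\kappa(x,y) \leq K(x,y)$.

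Combining the two inequalities proves the identity. The only genuinely nontrivial step is the direction $\kappa \leq K$: from an abstract intrinsic metric one must produce a concrete admissible test function realising the value $\sigma(x,y)$. The distance function $f_x = \sigma(x,\cdot)$ achieves exactly this, and the crucial point is that its $1$-Lipschitz property, a consequence of the triangle inequality, feeds into the Rademacher estimate to guarantee $|\nabla f_x| \leq 1$.
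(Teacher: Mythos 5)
Your proof is correct and follows essentially the same route as the paper: the easy inequality via $\sigma_f$ from Example~\ref{example:intrisic metrics from functions}, and the reverse inequality via the distance function $\sigma(x,\cdot)$, whose $1$-Lipschitz property combined with the Rademacher-type estimate gives $|\nabla f_x|\leq 1$. The only cosmetic difference is that the paper takes a supremum over all base points $w$ while you observe directly that $w=x$ already realises $\sigma(x,y)$.
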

\begin{proof}
Let $S_{xy} = \sup\{|f(x) - f(y)| \mid  |\nabla f| \leq 1\}$.  As noted in Example~\ref{example:intrisic metrics from functions} for $f \in C(X)$ with $|\nabla f| \leq 1$ the pseudo metric $\sigma_f$ is intrinsic. Since $\sigma_f(x,y) = |f(x) - f(y)|$, this leads to $\kappa(x,y) \geq S_{xy}$.

Now let $\sigma$ be an intrinsic pseudo metric. For $w \in X$ the function $f_w = \sigma(\cdot,w)$ is $1$-Lipschitz and therefore satisfies $|\nabla f_w| \leq 1$. For $x,y \in X$ we obtain $\sigma(x,y) = \sup \{|f_w(x) - f_w(y)| \mid w \in X\}$. Combining both observations leads to $S_{xy} \geq \sigma(x,y)$. Since $\sigma$ was arbitrary, this leads to $\kappa(x,y) \leq S_{xy}$.
\end{proof}

\begin{definition}[Star graph]
  We say a graph $b$ over $X$ is a {\em star graph} if it is connected and there exists $p \in X$ such that $N(x) = 1$ for all $x \in X \setminus\{p\}$. In this case, $p$ is called {\em center} of $b$.

A graph $b$ over $X$ is called a {\em galaxy} if all of its connected components are star graphs.

%  \item We say $b$ is a {\em dumbbell} if there exist $p,q \in X$ such that $p \sim q$ and $N(x) = 1$ for all $x \in X \setminus \{p,q\}$.
% \end{enumerate}
\end{definition}
\begin{remark}
 By definition star graphs are connected. Hence, if $p$ is a center, then every vertex from $X \setminus \{p\}$ is connected to $p$. The center of  a star graph is unique if $|X| \geq 3$.
%
% Dumbbells consist of two star graphs whose centers are  connected by a single edge.
\end{remark}

\begin{proposition}\label{proposition: star graphs}
 Let $b$ be a star graph over $(X,m)$. The following assertions are equivalent.
 \begin{enumerate}[(i)]
  \item The canonical pseudo metric $\kappa$ is intrinsic.
  \item There exists a center $p \in X$ of $b$ such that
  $$\sum_{x \in X \setminus \{p\}} m(x) \leq m(p).$$
  \end{enumerate}
\end{proposition}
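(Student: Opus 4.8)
The plan is to exploit the special structure of a star graph: if $p$ is a center, then the only edges join $p$ to a leaf $x \in X \setminus \{p\}$, so the intrinsic inequality is non-trivial only at the vertices $p$ and $x$ and involves only the values of a pseudo metric on neighbouring pairs $(p,x)$. The first step is therefore to compute $\kappa(p,x)$ explicitly for each leaf $x$. For the upper bound I would invoke Lemma~\ref{lemma:universal bound}, which gives $\sigma(p,x) \le s(p,x) = m(x)^{1/2}/b(p,x)^{1/2}$ for every intrinsic $\sigma$; combining this with the intrinsic inequality at $p$, namely $b(p,x)\sigma(p,x)^2 \le \sum_{y} b(p,y)\sigma(p,y)^2 \le m(p)$, yields $\sigma(p,x) \le (m(p)\wedge m(x))^{1/2}/b(p,x)^{1/2}$. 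For the matching lower bound I would use the function-induced metrics of Example~\ref{example:intrisic metrics from functions}: testing with $f = c\,\1_{\{x\}}$ shows $|\nabla f| \le 1$ precisely when $c \le (m(p)\wedge m(x))^{1/2}/b(p,x)^{1/2}$, while $\sigma_f(p,x) = c$. Taking the supremum over intrinsic pseudo metrics gives
$$\kappa(p,x) = \frac{(m(p)\wedge m(x))^{1/2}}{b(p,x)^{1/2}}.$$

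Since $\kappa$ is a pointwise supremum of pseudo metrics it is automatically a pseudo metric, so the only thing left for (i) is to verify the intrinsic inequalities at every vertex. Here I would stress that we never need to know $\kappa$ on pairs of leaves, because such pairs carry no edge, so only the values $\kappa(p,x)$ enter. At a leaf $x$ the inequality reads $b(p,x)\kappa(p,x)^2/m(x) = (m(p)\wedge m(x))/m(x) \le 1$, which always holds. At the center it reads $\sum_{x \neq p} b(p,x)\kappa(p,x)^2 \le m(p)$, that is, $\sum_{x\neq p} (m(p)\wedge m(x)) \le m(p)$. Thus (i) is equivalent to this single scalar inequality, and it remains to match it with (ii).

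The implication (ii)~$\Rightarrow$~(i) is immediate from $m(p)\wedge m(x) \le m(x)$. The reverse implication is the step I expect to be the main obstacle, because $m(p)\wedge m(x)$ can be strictly smaller than $m(x)$, so the inequality $\sum (m(p)\wedge m(x)) \le m(p)$ does not literally say $\sum m(x) \le m(p)$. The idea is to rule out the existence of a leaf with $m(x) > m(p)$: if $|X| \ge 3$ the center is unique and there are at least two leaves, so a single leaf $x_0$ with $m(x_0) > m(p)$ already contributes $m(p)\wedge m(x_0) = m(p)$, and any further leaf contributes a strictly positive amount, forcing the sum strictly above $m(p)$ --- a contradiction. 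Hence every leaf satisfies $m(x)\le m(p)$, so $m(p)\wedge m(x) = m(x)$ and the center inequality becomes exactly (ii). The degenerate cases $|X|\le 2$ would be treated directly, as both (i) and (ii) then hold automatically; for $|X| = 2$ one uses the freedom in (ii) to select the vertex of larger measure as the center (both vertices are centers in this case). Assembling these pieces gives the claimed equivalence.
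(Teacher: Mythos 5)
Your proof is correct and follows essentially the same route as the paper: compute $\kappa(p,x)^2 = (m(p)\wedge m(x))/b(p,x)$ via Lemma~\ref{lemma:universal bound} for the upper bound and the indicator-function test of Example~\ref{example:intrisic metrics from functions} for the lower bound, observe that the intrinsic inequality is automatic at the leaves, and reduce (i) to the single inequality $\sum_{x\neq p}(m(p)\wedge m(x))\leq m(p)$ at the center. Your treatment of the final equivalence with (ii) is in fact more careful than the paper's, which dismisses it as ``readily verified''; in particular your explicit handling of a leaf with $m(x)>m(p)$ and of the degenerate case $|X|=2$ (where the center is not unique) fills in exactly the details the paper glosses over.
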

\begin{proof}
Let $p \in X$ be a center of $b$. If $X = \{p\}$, then there is nothing to show and we therefore assume $|X| \geq 2$.  We first prove
\begin{equation*}
\kappa(x,p)^2 = \frac{m(x) \wedge m(p)}{b(x,p)}
\end{equation*}
for $x \in X \setminus \{p\}$. We already know from Lemma~\ref{lemma:universal bound} that the left hand side is smaller than the right hand side.

For the opposite inequality for $x \in X \setminus \{p\}$ we consider the function
$${f= \left( \frac{(m(x) \wedge m(p))^{\frac{1}{2}}}{b(x,p)^{\frac{1}{2}}} \right) 1_{\{x\}}}.$$
Using that $b$ is a star graph with center $p$, it is readily verified that $|\nabla f| \leq 1$. Hence, the discussion in Example~\ref{example:intrisic metrics from functions} shows that $\sigma_f$ is an intrinsic pseudo metric. Since
$$\sigma_f(x,p)^2 = (f(x) - f(p))^2 =  \frac{m(x) \wedge m(p)}{b(x,p)} $$
and since $\kappa$ is the supremum over all intrinsic pseudo metrics, we obtain the desired equality.

If $x \in X \setminus \{p\}$, our formula for $\kappa$ shows
$$\frac{1}{m(x)} \sum_{y \in X} b(x,y) \kappa(x,y)^2 = \frac{b(x,p)}{m(x)}  \frac{m(x) \wedge m(p)}{b(x,p)} \leq 1.  $$
In particular, $\kappa$ being intrinsic can only fail at $p$. But at $p$ we have
$$\frac{1}{m(p)} \sum_{y \in X} b(p,y) \kappa(p,y)^2 =\frac{1}{m(p)} \sum_{y \in X \setminus\{p\}} (m(y) \wedge m(p)).  $$
It is readily verified that this sum is $\leq 1$ if and only if $\sum_{y \in X \setminus\{p\}} m(y) \leq m(p)$. This shows the equivalence of (i) and (ii).
\end{proof}

\begin{proposition}\label{proposition:kappa is not intrinsic}
 Let $b$ be a locally finite connected graph over $(X,m)$ and assume it is not a star graph. Then $\kappa$ is not intrinsic.
\end{proposition}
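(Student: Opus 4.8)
The plan is to argue by contradiction: assume $\kappa$ is intrinsic and exhibit a vertex at which the defining inequality of intrinsic pseudo metrics fails. I would begin with a purely combinatorial observation: a connected graph that is not a star contains an edge $\{x,y\}$ with $N(x)\ge 2$ and $N(y)\ge 2$. Indeed, if no edge had both endpoints of degree at least two, then every vertex of degree $\ge 2$ would be surrounded by leaves, so two such vertices could never be joined by a path and would lie in distinct components; connectedness then forces at most one vertex of degree $\ge 2$, i.e. a star. Fixing such an edge and assuming without loss of generality $m(x)\le m(y)$, the goal becomes
$$\frac{1}{m(x)}\sum_{z\in X} b(x,z)\,\kappa(x,z)^2 > 1.$$

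Since $\kappa$ is the pointwise supremum over $\mathfrak M$ and, by Example~\ref{example:intrisic metrics from functions}, every $f$ with $|\nabla f|\le 1$ gives an intrinsic $\sigma_f\le\kappa$, it suffices to construct, separately for each neighbour $z$ of $x$, a test function $f_z$ with $|\nabla f_z|\le 1$ whose increment across $\{x,z\}$ is large enough that $\sum_{z\sim x} b(x,z)\,(f_z(x)-f_z(z))^2 > m(x)$. The advantage of working edge by edge is that $\kappa(x,z)\ge |f_z(x)-f_z(z)|$ for the \emph{individually} chosen $f_z$, so no single global gradient bound constrains the whole sum; concentration at $x$ with one common $f$ would only ever yield $\le m(x)$.

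The cleanest case, which I would treat first, is when $\{x,y\}$ is a bridge. Writing $C$ for the component of $x$ in the graph with $\{x,y\}$ deleted, the function $f=s\,\1_{C}$ satisfies $|\nabla f|\le 1$ precisely when $s^2\le m(x)/b(x,y)$ at $x$ and $s^2\le m(y)/b(x,y)$ at $y$, the gradient vanishing at every other vertex since no further edge crosses $\partial C$. Choosing $s^2=m(x)/b(x,y)=(m(x)\wedge m(y))/b(x,y)$ yields $b(x,y)\,\kappa(x,y)^2\ge m(x)$. As $x$ has a second neighbour $z\ne y$ with $b(x,z)>0$ and $\kappa(x,z)>0$ (the concentration function $c\,\1_{\{z\}}$ of Example~\ref{example:intrisic metrics from functions} is intrinsic for small $c>0$ by local finiteness), the remaining edges contribute a strictly positive amount, the sum at $x$ exceeds $m(x)$, and $\kappa$ cannot be intrinsic; combined with Lemma~\ref{lemma:universal bound} this even gives equality $\kappa(x,y)^2=(m(x)\wedge m(y))/b(x,y)$ on bridges.

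The main obstacle is the general case, where $\{x,y\}$ lies on a cycle and the sharp indicator construction breaks down: an alternative path lets the plateau $\1_{C}$ leak and creates an uncontrolled gradient on $\partial C$, and the bound $(m(x)\wedge m(y))/b(x,y)$ may genuinely fail (for a unit triangle one computes $\kappa(x,y)^2=4/5<1$). Here I would replace the sharp indicator by honestly non-constant profiles that spread the required jump across $\{x,z\}$ over the neighbouring vertices — raising $f_z$ near $x$ and lowering it near $z$ — and estimate $|\nabla f_z|$ quantitatively, the content being an isoperimetric/capacity type lower bound guaranteeing $\sum_{z\sim x} b(x,z)\,(f_z(x)-f_z(z))^2 > m(x)$ even when no single edge saturates its bound from Lemma~\ref{lemma:universal bound}. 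I expect this to be the delicate step, precisely because it is where the constraint $|\nabla f|\le 1$ must be reconciled with forcing large increments on all edges at $x$ at once (through separate functions), and it is exactly where local finiteness and the presence of a second neighbour at both $x$ and $y$ enter. Once this estimate is available, the contradiction closes as in the bridge case, showing that $\kappa$ is not intrinsic.
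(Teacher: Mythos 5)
Your combinatorial starting point is correct and matches the paper: a connected non-star graph contains an edge $\{x,y\}$ with $N(x)\ge 2$ and $N(y)\ge 2$, and your justification of this is fine. Your bridge case is also correct: when $\{x,y\}$ is the only edge between the component $C$ of $x$ and its complement, the indicator profile $s\,\1_C$ with $s^2=(m(x)\wedge m(y))/b(x,y)$ has $|\nabla f|\le 1$, which together with Lemma~\ref{lemma:universal bound} pins down $\kappa(x,y)$ exactly, and the second neighbour of $x$ contributes a strictly positive amount, so the intrinsic inequality fails at $x$ (taking $m(x)\le m(y)$). But the proof stops there. The entire difficulty of the proposition sits in the non-bridge case, and what you offer for it --- ``replace the sharp indicator by honestly non-constant profiles'' and ``an isoperimetric/capacity type lower bound'' --- is a plan, not an argument. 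You yourself note that the clean bound $\kappa(x,y)^2=(m(x)\wedge m(y))/b(x,y)$ genuinely fails on a triangle, so your strategy requires a new quantitative lower bound on $\sum_{z\sim x}b(x,z)\kappa(x,z)^2$ that you never establish. Moreover, your target is strictly stronger than what is needed: you aim to show the defining inequality fails \emph{at the chosen vertex $x$}, which demands pointwise lower bounds on $\kappa$ along every edge at $x$ simultaneously; nothing in your sketch supplies these.

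The paper avoids this entirely by arguing against the supremum property rather than against the pointwise inequality. Assuming $\kappa$ intrinsic, it perturbs $\kappa$ viewed as an edge weight: it adds a small $\varepsilon$ on the edge $\{x,y\}$ and subtracts $s/3$ (where $s>0$ is the minimal $\kappa$-length of edges at $x$ and $y$, positive by local finiteness) on the other edges incident to $x$ and $y$, checks that the resulting weight $w$ still satisfies $\sum_z b(u,z)w(u,z)^2\le m(u)$ at every vertex, and then shows that the induced path pseudo metric $d_w$ is intrinsic with $d_w(x,y)\ge\kappa(x,y)+\varepsilon$. This contradicts $\kappa$ being the supremum of all intrinsic pseudo metrics, and the only input about $\kappa$ is that it is a metric (so $s>0$), not any sharp identity for its values. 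If you want to salvage your direct approach, you would need to prove the missing capacity-type estimate; as written, the proposal does not prove the proposition.
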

\begin{proof}
 We assume that $\kappa$ is intrinsic and construct another intrinsic pseudo metric which is larger than $\kappa$ in one argument. Since $b$ is connected but no star graph, there exist $x,y \in X$ with $x \sim y$ and $N(x),N(y) \geq 2$.
Using that $b$ is locally finite we can assume without loss of
generality  that  $y$  satisfies
 $$\kappa(x,y) = \min \{ \kappa(x,z) \mid z \sim x \text{ and } N(z) \geq 2\}.$$

Moreover, local finiteness implies that $\kappa$ is a metric and we
obtain
$$s:= \min (\{\kappa(x,z) \mid z \sim x \} \cup \{\kappa(y,z) \mid z \sim y \}) > 0.    $$
 %
%
%
%  . Using that we construct an intrinsic metric, which is bigger than $\kappa$ for one argument. Since $b$ is not a star graph, there are different $x,x'  \in X$ such that $N(x),N(x')\geq 2$. We first show that there is $x_1 \in X$ such that $x_1 \sim x$ and $N(x_1)\geq2$. For that purpose let $(x_0,...,x_n)$ be a path from $x$ to $x'$. If $n=1$, it is $x'=x_1$. If $n\geq 2$, then w.r.o.g. $x_2 \neq x$, because if that was the case we can use the path $(x_0,x_3,...,x_n)$ instead. Since $x_2 \neq x$ it is  $x \sim x_1 \sim x_2$, which means that $N(x_1) \geq 2$. Because $b$ is locally finite there exists $y \in X$, such that $y \sim  x$, $N(y)\geq 2$ and
% \begin{equation*}
% \kappa(x,y)= \min(\{\kappa(x,z): z \sim x \text{ and } N(z)\geq 2\}).
% \end{equation*}
% Additionally we set
% \begin{equation*}
% s:= \min(\{\kappa(x,z): z \sim x \} \cup \{\kappa(y,z): z \sim y\}).
% \end{equation*}
% In Example 2.1 we have seen an intrinsic path pseudo metric, which induces the discrete toplogy, which means especially that it is positive for any two different vertices. Therefore $\kappa$ is also positive for any two different vertices and $s>0$ as it is the minimum of finitely many positive values. \\
% Because  $\kappa$ is intrinsic and there are $z_1,z_2 \in X \setminus \{x,y\}$ with $b(x,z_1)>0$ and $b(y,z_2)>0$, it follows that
%
By our choice of $s$ and since $\kappa$ is intrinsic, there exists
$0 <\varepsilon < s/3$ with

\begin{equation*}
\frac{1}{m(x)} \left(\sum\limits_{z \in X\setminus \{y\} }b(x,z) (k(x,z)-\frac{s}{3})^2 +b(x,y) (k(x,y) + \varepsilon)^2 \right) \leq  1
\end{equation*}
and
\begin{equation*}
\frac{1}{m(y)} \left(\sum\limits_{z \in X\setminus \{x\} }b(y,z) (k(y,z)-\frac{s}{3})^2 +b(y,x) (k(y,x) + \varepsilon)^2 \right) \leq  1.
\end{equation*}
%
% \begin{equation*}
% \frac{1}{m(x)} \left(\sum\limits_{z \in X\setminus x}b(y,z) (k(y,z)-\frac{s}{3})^2 +b(y,x) k(y,x)^2 \right) < \frac{1}{m(x)} \sum\limits_{z \in X\setminus y }b(x,z) k(x,z)^2 \leq 1.
% \end{equation*}
% %
% Since the inequality is strict, there exists a sufficiently small $\epsilon \in (0,\frac{s}{3}]$, such that
% \begin{equation*}
% \frac{1}{m(x)} \left(\sum\limits_{z \in X\setminus y }b(x,z) (k(x,z)-\frac{s}{3})^2 +b(x,y) (k(x,y)+\epsilon)^2 \right) < \frac{1}{m(x)} \sum\limits_{z \in X\setminus y }b(x,z) k(x,z)^2 \leq 1
% \end{equation*}
% and
% \begin{equation*}
% \frac{1}{m(x)} \left(\sum\limits_{z \in X\setminus x}b(y,z) (k(y,z)-\frac{s}{3})^2 +b(y,x) (k(y,x)+\epsilon)^2 \right) < \frac{1}{m(x)} \sum\limits_{z \in X\setminus y }b(x,z) k(x,z)^2 \leq 1.
% \end{equation*}
%
Next we consider the weight
$$w = \kappa + \varepsilon 1_C  - \frac{s}{3} 1_D, $$
with $C = \{x,y\} \times \{x,y\}$ and $D = \{x,y\} \times (X \setminus \{x,y\}) \cup (X \setminus \{x,y\}) \times \{x,y\}$. The above inequalities and $\kappa$ being intrinsic yield
$$\frac{1}{m(u)}\sum_{z \in X} b(u,z)w(u,z)^2 \leq 1$$
for all $u \in X$. Hence,  the induced path metric $d_w$ is
intrinsic.

We finish the proof by showing $d_w(x,y) \geq \kappa(x,y) +
\varepsilon$. Let $\gamma = (x_0,\ldots,x_n)$ be a path connecting
$x$ and $y$. Assume without loss of generality that the $x_j$,
$j=1,\ldots, n$,  are pairwise different. We prove $L_w(\gamma) \geq
\kappa(x,y) + \varepsilon$.

If $n = 1$, then $x_0 = x, x_1 = y$ and we obtain
$$L_w(\gamma) = w(x,y) = \kappa(x,y) + \varepsilon. $$

If $n\geq 2$, without loss of generality we can assume $N(x_i) \geq
2$ for all $i = 1,\ldots,n-1$ (otherwise $(x_0,\ldots
x_{i-1},x_{i+1},\ldots,x_n)$ is a shorter path connecting $x$ and
$y$). In particular, we then have $N(x_1)\geq 2$.

Now, by the very definition of path we have $x_1 \neq x, x_{n-1}
\neq y$ and - as the elements of the path are pairwise different -
we also have   $x_1\neq y$ and $x_{n-1} \neq x$. So, $(x,x_1)$ and
$(x_{n-1},y)$ both belong to $D$. Hence, we obtain
\begin{align*}
 L_w(\gamma) \geq w(x,x_1) + w(x_{n-1},y) = \kappa(x,x_1) - \frac{s}{3} + \kappa(x_{n-1},y) - \frac{s}{3}.
\end{align*}
The definition of $s$ yields $ \kappa(x_{n-1},y)  \geq s$. Our
choice of $y$ (together with $N(x_1) \geq 2$)  yields $\kappa(x,x_1)
\geq \kappa(x,y)$. Altogether we arrive at
$$L_w(\gamma) \geq \kappa(x,y) + \frac{s}{3} \geq \kappa(x,y) + \varepsilon,$$
where we used $\varepsilon < s/3$ in the last estimate.
\end{proof}
\begin{remark}
Example~\ref{example:no intrinsic metric} shows that the previous proposition does not hold in general on graphs that are not locally finite. The graph without nontrivial intrinsic pseudo metric constructed there is a complete graph (every vertex is connected to every other vertex). Note however that the previous proposition holds true (with essentially the same proof) under weaker conditions. One only needs the existence of $x \in X$  with  $2 \leq N(x) < \infty$ and  $N(z) < \infty$ for all $z \sim x$ and the existence of $y \sim x$ with $N(y) \geq 2$.   In other words, the combinatorial $2$-ball around $x$ is locally finite and the graph is not a star graph.  
\end{remark}

\begin{theorem}\label{thm:star graphs}
 Let $b$ be a connected locally finite graph over $(X,m)$. The following assertions are equivalent.

\begin{enumerate}[(i)]
 \item The canonical metric $\kappa$ is intrinsic.
 \item $b$ is a star graph with center $p \in X$ such that
 $$\sum_{x \in X \setminus \{p\}} m(x) \leq m(p).$$
\end{enumerate}
\end{theorem}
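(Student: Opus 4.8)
The plan is to observe that this theorem is essentially an assembly of the two preceding propositions, namely Proposition~\ref{proposition: star graphs} (which handles the star graph case) and Proposition~\ref{proposition:kappa is not intrinsic} (which rules out non-star graphs), so that no genuinely new argument is required. I would prove the two implications separately, drawing on these results.

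For the implication (ii)~$\Rightarrow$~(i), I would argue directly. Assuming $b$ is a star graph with a center $p \in X$ satisfying $\sum_{x \in X \setminus \{p\}} m(x) \leq m(p)$, the hypotheses of Proposition~\ref{proposition: star graphs} are met, and the implication (ii)~$\Rightarrow$~(i) of that proposition immediately yields that $\kappa$ is intrinsic.

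For the converse (i)~$\Rightarrow$~(ii), the key step is to first establish that $b$ must be a star graph. Here I would invoke the contrapositive of Proposition~\ref{proposition:kappa is not intrinsic}: since $b$ is connected and locally finite and $\kappa$ is assumed intrinsic, $b$ cannot fail to be a star graph, for otherwise that proposition would force $\kappa$ to be non-intrinsic. Once $b$ is known to be a star graph, I would apply the implication (i)~$\Rightarrow$~(ii) of Proposition~\ref{proposition: star graphs} to the intrinsic metric $\kappa$, which produces a center $p$ with the required bound $\sum_{x \in X \setminus \{p\}} m(x) \leq m(p)$. This gives (ii).

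Since both directions reduce cleanly to the cited propositions, I do not anticipate a real obstacle in the proof of the theorem itself; the substantive work has already been carried out in establishing those propositions. The only point deserving care is to confirm that the standing hypotheses (connectedness and local finiteness) guarantee, via Lemma~\ref{lemma:universal bound} and the remarks following it, that $\kappa$ is genuinely a metric, so that the phrase ``canonical metric'' in the statement is justified and the two propositions apply verbatim.
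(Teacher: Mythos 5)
Your proposal is correct and follows exactly the paper's own argument: the forward direction combines Proposition~\ref{proposition:kappa is not intrinsic} (to force the star-graph structure) with Proposition~\ref{proposition: star graphs} (to get the measure condition), and the converse is Proposition~\ref{proposition: star graphs} directly. Your additional remark that local finiteness guarantees $\kappa$ is genuinely a metric is a sensible sanity check but is already addressed in the paper's discussion preceding the theorem.
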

\begin{proof}
(i) $\Rightarrow$ (ii):  It follows from  Proposition~\ref{proposition:kappa is not intrinsic} that $\kappa$ is not intrinsic if $b$ is not a star graph.  The criterion on $m$ follows Proposition~\ref{proposition: star graphs}.

(ii) $\Rightarrow$ (i): This follows from Proposition~\ref{proposition: star graphs}.
\end{proof}

\begin{remark}
 We formulated the theorem for connected graphs but it also holds on locally finite non-connected graphs. In this case, $\kappa$ is an intrinsic metric if and only if $b$ is a galaxy and the inequality in (ii) holds on each of its stars (i.e. every connected component).
\end{remark}

Next we discuss how $\kappa$ being intrinsic is related to uniqueness of maximal intrinsic pseudo metrics. To this end, we need a finitary description of $\kappa$ being intrinsic as given by the next proposition.

\begin{proposition}\label{lemma:finite sups are sufficient}
Let $b$ be a graph over $(X,m)$. The following assertions are equivalent.
\begin{enumerate}[(i)]
 \item $\kappa$ is an intrinsic metric.
 \item For all intrinsic metrics $\rhoooo,\sigma$ the metric $\rhoooo \vee \sigma$ is intrinsic.
\end{enumerate}
\end{proposition}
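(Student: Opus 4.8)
The proof splits into the two implications, and essentially all of the work sits in $(ii)\Rightarrow(i)$. The single tool I would record at the outset is the \emph{monotonicity} of the intrinsic condition: whenever a pseudo metric $\tau$ satisfies $\tau\le\eta$ pointwise for some intrinsic pseudo metric $\eta$, then $\tau$ is intrinsic as well, since for every $x\in X$ one has $\frac{1}{m(x)}\sum_{y}b(x,y)\tau(x,y)^2\le\frac{1}{m(x)}\sum_{y}b(x,y)\eta(x,y)^2\le 1$. For $(i)\Rightarrow(ii)$ this settles everything at once: given intrinsic $\varrho$ and $\sigma$, both lie below $\kappa$ by the very definition of $\kappa$ as the pointwise supremum of $\mathfrak M$, hence $\varrho\vee\sigma\le\kappa$; as $\varrho\vee\sigma$ is again a pseudo metric (a pointwise maximum of pseudo metrics is a pseudo metric) and $\kappa$ is intrinsic by assumption, monotonicity yields that $\varrho\vee\sigma$ is intrinsic.

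For $(ii)\Rightarrow(i)$ I would fix $x\in X$ and aim at the single inequality $\sum_{y}b(x,y)\kappa(x,y)^2\le m(x)$. The idea is to localize to finite sets, where finitely many intrinsic metrics already suffice to approximate $\kappa$. So fix a finite $F\subseteq X$ and $\varepsilon>0$. Using $\kappa(x,y)=\sup\{\sigma(x,y)\mid\sigma\in\mathfrak M\}$, I choose for each $y\in F$ an intrinsic pseudo metric $\sigma_y$ with $\sigma_y(x,y)\ge\kappa(x,y)-\varepsilon$, and set $\tau=\bigvee_{y\in F}\sigma_y$. By hypothesis $(ii)$ together with induction on $|F|$, the finite pointwise maximum $\tau$ is again intrinsic.

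It then remains to take two limits. For $y\in F$ we have $\tau(x,y)\ge\sigma_y(x,y)\ge\kappa(x,y)-\varepsilon$ and $\tau\ge 0$, hence $\tau(x,y)\ge(\kappa(x,y)-\varepsilon)\vee 0$; restricting the intrinsic inequality for $\tau$ to the indices in $F$ gives $\sum_{y\in F}b(x,y)\big((\kappa(x,y)-\varepsilon)\vee 0\big)^2\le m(x)$. Since $F$ is finite I may let $\varepsilon\downarrow 0$ to obtain $\sum_{y\in F}b(x,y)\kappa(x,y)^2\le m(x)$, and finally take the supremum over all finite $F\subseteq X$ to conclude $\sum_{y}b(x,y)\kappa(x,y)^2\le m(x)$, i.e.\ $\kappa$ is intrinsic. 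The main obstacle, and the only genuinely delicate point, is precisely this passage: hypothesis $(ii)$ controls only \emph{finite} maxima, whereas $\kappa$ is a supremum over an a priori infinite family, and it is the finite-set localization together with the double limit in $\varepsilon$ and $F$ that bridges the gap. Note that no local finiteness of $b$ is needed anywhere.
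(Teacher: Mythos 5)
Your proof is correct, and while the easy direction $(i)\Rightarrow(ii)$ coincides with the paper's (both rest on the monotonicity of the intrinsic condition and the fact that $\varrho\vee\sigma\le\kappa$), your argument for $(ii)\Rightarrow(i)$ takes a genuinely different route. The paper uses the countability of $X\times X$ to produce a single sequence $(\sigma_n)$ in $\mathfrak M$ with $\kappa=\sup_n\sigma_n$, observes that the finite maxima $\sup\{\sigma_n\mid 1\le n\le N\}$ are intrinsic by iterating $(ii)$, and then invokes the closedness of $\mathfrak M$ under pointwise limits established in Proposition~\ref{proposition:compactness} (where the work is done by Fatou's lemma). You instead argue locally at a fixed vertex $x$: for a finite $F\subseteq X$ and $\varepsilon>0$ you pick finitely many intrinsic pseudo metrics nearly realizing $\kappa(x,\cdot)$ on $F$, take their (intrinsic, by $(ii)$ and induction) maximum, truncate the intrinsic inequality to $F$, and then let $\varepsilon\downarrow 0$ and exhaust by finite sets. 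This is more self-contained --- it avoids any appeal to the compactness/closedness proposition and does not even use countability of $X$ --- at the cost of essentially re-proving, via the double limit over $\varepsilon$ and $F$, the same monotone-convergence fact that the paper delegates to Fatou. One cosmetic remark: you only verify the intrinsic inequality for $\kappa$; strictly speaking one should also note that $\kappa$ is a finite pseudo metric, but this is already established in the paper before the proposition (via Lemma~\ref{lemma:universal bound} and the fact that a pointwise supremum of pseudo metrics is a pseudo metric), so nothing is missing in substance.
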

\begin{proof}
(i) $\Rightarrow$ (ii): If $\rhoooo,\sigma$ are intrinsic, then $\rhoooo,\sigma \leq \kappa$, which implies $\rhoooo \vee \sigma \leq \kappa$. Since $\kappa$ is intrinsic, this implies that $\rhoooo \vee \sigma $ is intrinsic.

(ii) $\Rightarrow$ (i): Since $X \times X$ is countable, we find a sequence $(\sigma_n)$ in $\mathfrak M$ with
$$\kappa  = \sup \{\sigma_n \mid  n \in \N\} = \lim_{N \to \infty} \sup \{\sigma_n \mid  1 \leq n \leq N\}, $$
where the convergence holds pointwise. Iterating the assumption yields $\sup \{\sigma_n \mid  1 \leq n \leq N\} \in \mathfrak M$. Since $\mathfrak M$ is closed with respect to pointwise convergence, we arrive at (i).
\end{proof}

Even though $\kappa$ may not be an intrinsic metric there always exist maximal intrinsic pseudo metrics. More precisely, an intrinsic pseudo metric $\sigma$ is called {\em maximal} if for any other intrinsic pseudo metric $\rhoooo$ the inequality $\sigma(x,y) \leq \rhoooo(x,y)$ for all $x,y \in X$ implies $\rhoooo = \sigma$.

\begin{proposition}[Existence of maximal intrinsic metrics]\label{prop:existence maximal metrics}
Let $b$ be a graph over $(X,m)$. For every intrinsic pseudo metric $\sigma$ there exists a maximal intrinsic pseudo metric $\rhoooo$ with $\sigma \leq \rhoooo$.
\end{proposition}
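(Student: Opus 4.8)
The plan is to invoke Zorn's lemma on the collection of intrinsic pseudo metrics dominating $\sigma$. Concretely, I would fix the given intrinsic pseudo metric $\sigma$ and consider
$$ P = \{\rhoooo \in \mathfrak M \mid \sigma \leq \rhoooo\}, $$
partially ordered by the pointwise order $\leq$. This set is nonempty since $\sigma \in P$, and any maximal element $\rhoooo$ of $P$ is automatically the desired \emph{maximal intrinsic pseudo metric}: if $\rhoooo'$ is an intrinsic pseudo metric with $\rhoooo \leq \rhoooo'$, then $\sigma \leq \rhoooo \leq \rhoooo'$ forces $\rhoooo' \in P$, so maximality of $\rhoooo$ in $P$ gives $\rhoooo' = \rhoooo$. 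Hence, once the hypotheses of Zorn's lemma are verified, the statement follows at once.

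The substance of the argument is to show that every chain $\mathcal C$ in $P$ admits an upper bound in $P$. The natural candidate is the pointwise supremum $\tau = \sup \mathcal C$. As a pointwise supremum of pseudo metrics it is again a pseudo metric, and by Lemma~\ref{lemma:universal bound} it satisfies $\tau \leq d_s$, so it is finite. It dominates $\sigma$: for a nonempty chain, pick any element of $\mathcal C$, which is itself $\geq \sigma$, and for the empty chain one simply uses $\sigma$ as an upper bound (recall $P$ is nonempty). The only nontrivial point is that $\tau$ is again intrinsic.

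This last point is exactly where I expect the main obstacle, and where countability of $X$ enters: a chain need not be countable, so one cannot directly pass to a monotone limit. To circumvent this I would exploit that $X \times X$ is countable. For each pair $(x,y)$ choose a sequence in $\mathcal C$ whose values at $(x,y)$ approach $\tau(x,y)$; collecting these countably many sequences produces a countable subset $\mathcal C_0 \subseteq \mathcal C$ with $\sup \mathcal C_0 = \tau$. Enumerating $\mathcal C_0 = \{\rhoooo_k\}$ and setting $\tau_N = \rhoooo_1 \vee \dots \vee \rhoooo_N$, the chain property guarantees that each $\tau_N$ is one of the $\rhoooo_k$ and therefore lies in $\mathfrak M$, while $\tau_N \uparrow \tau$ pointwise. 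Now closedness of $\mathfrak M$ under pointwise convergence (Proposition~\ref{proposition:compactness}) — or, equivalently, monotone convergence applied to the inequalities $\sum_y b(x,y)\tau_N(x,y)^2 \leq m(x)$, which pass to the limit since the summands are nonnegative and increasing in $N$ — yields $\tau \in \mathfrak M$. Thus $\tau \in P$ is an upper bound for $\mathcal C$, the hypotheses of Zorn's lemma are met, and a maximal element of $P$ gives the required maximal intrinsic pseudo metric $\rhoooo \geq \sigma$.
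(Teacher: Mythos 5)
Your proposal is correct and follows essentially the same route as the paper: Zorn's lemma on the set of intrinsic pseudo metrics dominating $\sigma$, reduction of an arbitrary chain to a countable one via countability of $X\times X$, replacement of that countable family by a monotone sequence inside the chain (your $\tau_N=\rhoooo_1\vee\dots\vee\rhoooo_N$ is the paper's inductively defined $r_n$), and closedness of $\mathfrak M$ under pointwise limits to conclude the supremum is intrinsic. No gaps.
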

\begin{proof}
We use Zorn's lemma. The pointwise order on
$$\mathfrak M_\sigma := \{\rhoooo \mid \rhoooo  \in  \mathfrak M  \text{ with } \sigma \leq \rhoooo\}$$
is a partial order. Let $\mathfrak F \subseteq \mathfrak M_\sigma$ be a totally ordered subset. We show that $\mathfrak F$ has an upper bound in $\mathfrak M_\sigma$.

Since $X \times X$ is countable, there exists a sequence $(\rhoooo_n)$ in $\mathfrak F$ with
$$ \sup\{\rhoooo_n \mid n \in \N\} = \sup\{\rhoooo \mid \rhoooo \in \mathfrak F\}.$$
Since $\mathfrak F$ is totally ordered, for every $n \in \N$ we have $\rhoooo_n \leq \rhoooo_{n+1}$ or $\rhoooo_n \geq \rhoooo_{n+1}$. We use this to inductively define $r_1  =  \rhoooo_1$ and for $n \geq 2$
$$r_n = \begin{cases}
            \rhoooo_n &\text{if } r_{n-1} < \rhoooo_n\\
            r_{n-1} &\text{if } r_{n-1} \geq \rhoooo_n
           \end{cases}.
$$
Then $r_n \in \mathfrak F$, $r_n \leq r_{n + 1}$  and $\rhoooo_n \leq r_n$. We infer
$$ \sup\{\rhoooo \mid \rhoooo \in \mathfrak F\} = \sup\{\rhoooo_n \mid n \in \N\} = \sup\{r_n \mid n \in \N\} = \lim_{n \to \infty} r_n, $$
where the limit holds pointwise. By
Proposition~\ref{proposition:compactness} pointwise limits of
intrinsic pseudo metrics are again intrinsic pseudo metrics. Hence,
$\sup\{\rhoooo \mid \rhoooo \in \mathfrak F\} \in \mathfrak M_\sigma$ is
an upper bound for $\mathfrak F$.
\end{proof}

The fact that in general $\kappa$ is not an intrinsic metric leads to non-uniqueness of maximal intrinsic metrics.

\begin{theorem}\label{theorem:uniqueness of maximal intrinsic metrics}
 Let $b$ be a locally finite connected graph over $(X,m)$. The following assertions are equivalent.

\begin{enumerate}[(i)]
 \item There exists a unique maximal intrinsic metric.
 \item  $b$ is a star graph and it has a center $p \in X$ such that
 $$\sum_{x \in X \setminus \{p\}} m(x) \leq m(p).$$
\end{enumerate}
\end{theorem}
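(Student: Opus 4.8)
The plan is to reduce everything to the single statement that $\kappa$ is intrinsic, which by Theorem~\ref{thm:star graphs} is already known to be equivalent to (ii). Thus it suffices to prove that (i) holds if and only if $\kappa$ is intrinsic, and then chain with Theorem~\ref{thm:star graphs}. The conceptual engine is the elementary order-theoretic fact that, in a partially ordered set in which every element lies below a maximal element, a unique maximal element exists precisely when a largest element exists. Proposition~\ref{prop:existence maximal metrics} supplies exactly this ``every element lies below a maximal one'' property for $\mathfrak M$, while the largest element, if it exists, must be the pointwise supremum $\kappa$.

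First I would treat the easy implication ``$\kappa$ intrinsic $\Rightarrow$ (i)''. If $\kappa \in \mathfrak M$, then since $\kappa = \sup \mathfrak M$ it is the largest element of $\mathfrak M$. Because the graph is locally finite and connected, $\kappa$ is a genuine metric (as recorded after the definition of $\kappa$). Any maximal intrinsic metric $\rhoooo$ satisfies $\rhoooo \leq \kappa$, and maximality forces $\rhoooo = \kappa$; hence $\kappa$ is the unique maximal intrinsic metric, giving (i).

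For the reverse implication ``(i) $\Rightarrow$ $\kappa$ intrinsic'', let $\sigma$ denote the unique maximal intrinsic metric. Local finiteness and connectedness guarantee that intrinsic metrics exist (Example~\ref{example:path metrics}); fix one such metric $\rhoooo_0$. Given any intrinsic metric $\rhoooo$, Proposition~\ref{prop:existence maximal metrics} provides a maximal intrinsic pseudo metric $\tau \geq \rhoooo$; since $\tau$ dominates the metric $\rhoooo$, it is itself a metric, hence a maximal intrinsic metric, and uniqueness forces $\tau = \sigma$, so $\rhoooo \leq \sigma$. To pass from genuine metrics to arbitrary elements of $\mathfrak M$ I would invoke convexity (Proposition~\ref{proposition:compactness}): for an arbitrary intrinsic pseudo metric $\mu$ and $t \in (0,1]$, the convex combination $(1-t)\mu + t\rhoooo_0$ is an intrinsic metric, hence $\leq \sigma$; letting $t \to 0$ yields $\mu \leq \sigma$. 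Therefore $\kappa = \sup \mathfrak M \leq \sigma$, while $\sigma \leq \kappa$ because $\sigma$ is intrinsic, so $\kappa = \sigma$ is intrinsic.

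Finally I would combine the two implications with Theorem~\ref{thm:star graphs} to conclude (i) $\Leftrightarrow$ ``$\kappa$ intrinsic'' $\Leftrightarrow$ (ii). The step I expect to require the most care is the reconciliation between ``maximal intrinsic metric'' and ``maximal intrinsic pseudo metric'': one must ensure that the unique maximal intrinsic metric really captures the top of all of $\mathfrak M$ and not merely of the genuine metrics. The convexity argument above is the device that bridges this gap, and the small but essential observation that makes the uniqueness hypothesis bite is that a maximal intrinsic pseudo metric dominating a genuine metric is automatically a metric.
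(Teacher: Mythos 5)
Your proof is correct, but it takes a genuinely different route from the paper's for the direction (i) $\Rightarrow$ (ii). The paper argues by contraposition through Proposition~\ref{lemma:finite sups are sufficient}: if (ii) fails, then by Theorem~\ref{thm:star graphs} the metric $\kappa$ is not intrinsic, so there exist intrinsic pseudo metrics $\varrho,\sigma$ whose pointwise maximum is not intrinsic; pushing each up to a maximal element via Proposition~\ref{prop:existence maximal metrics} and noting that the maximum of the two enlarged metrics is still not intrinsic, one concludes the two maximal elements must be distinct. You instead argue directly that the unique maximal element must in fact be the largest element of $\mathfrak M$ and hence coincide with $\kappa$, using the order-theoretic principle that uniqueness of a maximal element plus ``every element lies below a maximal one'' yields a greatest element. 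Your argument requires two small extra devices that the paper does not need: the observation that a maximal intrinsic pseudo metric dominating a genuine metric is itself a metric (needed to make the uniqueness hypothesis applicable), and the convexity trick $(1-t)\mu + t\varrho_0$ to extend the domination from intrinsic metrics to all of $\mathfrak M$ before taking $t\to 0$; both steps are sound, resting on Proposition~\ref{proposition:compactness} and the existence of an intrinsic metric on locally finite connected graphs from Example~\ref{example:path metrics}. What your approach buys is that it bypasses Proposition~\ref{lemma:finite sups are sufficient} entirely and makes the order-theoretic mechanism explicit; what the paper's approach buys is brevity and the concrete exhibition of two distinct maximal intrinsic pseudo metrics when (ii) fails, which is slightly more informative than a pure uniqueness contradiction.
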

\begin{proof}
 (ii) $\Rightarrow$ (i): By Theorem~\ref{thm:star graphs} assertion (ii) implies that $\kappa$ is an intrinsic metric. Clearly it is maximal.

 (i) $\Rightarrow$ (ii): Assume (ii) does not hold. According to Proposition~\ref{lemma:finite sups are sufficient} and Theorem~\ref{thm:star graphs} there exist intrinsic pseudo metrics $\rhoooo,\sigma$ such that $\rhoooo \vee \sigma$ is not intrinsic. By Proposition~\ref{prop:existence maximal metrics} we can assume that $\rhoooo$ and $\sigma$ are maximal. Since $\rhoooo \vee \sigma$ is not intrinsic, we infer $\rhoooo \neq \sigma$.
\end{proof}

For certain examples non-uniqueness of maximal intrinsic pseudo
metrics can be seen quite  easily.

\begin{example}
 Consider $X = \mathbb Z$ with $b(n,m) = 1$ if $|n-m| = 1$ and $b(n,m) = 0$, else. Moreover, let $m(n) = 1$, $n \in  \Z$, be the counting measure. For any function $f \colon \Z \to \R$ with
 $$1 = |\nabla f|^2(n) =  (f(n + 1) - f(n))^2 +  (f(n - 1) - f(n))^2 $$
 for  all $n \in \Z$ the pseudo metric $\sigma_f$ is maximal intrinsic. Clearly, there are many such functions on $\mathbb Z$.
\end{example}

\section{Existence of intrinsic pseudo metrics with finite balls}\label{section:existence of metrics with finite balls}

In this section we (partially) characterize when weakly spherically symmetric graphs (see below for a definition) admit intrinsic pseudo metrics with finite balls.  To this end we employ a criterion involving existence of certain cut-off functions.

A weighted graph $b$ over $(X,m)$ is called {\em $\chi$-complete} if there exists a sequence $(\chi_n)$ in $C_c(X)$ with $\chi_n \to 1$ pointwise and $C > 0$ such that
$$|\nabla \chi_n| \leq C $$
for all  $n \in \N$. In this case, we simply call $(\chi_n)$ a {\em sequence of cut-off functions} for $b$ over  $(X,m)$. Let $T \colon \R \to \R, T(x)  = (x \wedge 1)\vee 0$. Then $T$ is $1$-Lipschitz and therefore $|\nabla (T \circ \chi_n)| \leq |\nabla \chi_n| \leq C$. Hence, we may always assume that cut-off functions  satisfy $0 \leq \chi_n \leq 1$. It was recently proven in \cite{LSW21}  that a locally finite graph $b$ over $(X,m)$ is $\chi$-complete if and only if there exists an intrinsic pseudo metric with finite balls.

From now on we assume that the graph $b$ over $(X,m)$ is connected. Recall that $d$ is the combinatorial graph metric. Fix $o \in X$. We say that a function $f \colon X \to \R$ is radially symmetric with respect to $o$  if the value $f(x)$ only depends on $d(x,o)$. In this case, given $x \in X$ with $d(x,o) = r$ we abuse notation and write $f(r)$ for the value $f(x)$. Often $o$ is fixed and we simply say that $f$ is radially symmetric.

A graph $b$ over $(X,m)$ is called  {\em weakly spherically symmetric} if there exists $o \in X$ (the {\em root}) such that the functions $\kappa_{\pm} \colon X \to \R$ defined by
$$ \kappa_{\pm}(x) = \frac{1}{m(x)} \sum_{y \in S_{r\pm 1} } b(x,y), \quad \text{if } x \in S_{r}, $$
are radially symmetric. Here, $S_r = \{x \in X \mid d(x,o) = r\}$ denotes the combinatorial $r$-sphere around $o$ if $r \in \mathbb N_0$¸ and we use the convention $S_{-1} = \emptyset$, which leads to $\kappa_-(o) = 0$. In this case, for radially symmetric  $f \colon X \to \R$ the function $|\nabla  f|^2$ is also radially symmetric and for  $r \geq 1$ it takes the form
\begin{align*}
 |\nabla  f|^2 (r) &= \kappa_-(r) (f(r) - f(r-1))^2 + \kappa_+(r) (f(r) - f(r+1))^2 \\
 &= \frac{1}{m(S_r)}  \kappa_+(r-1)m(S_{r-1}) (f(r) - f(r-1))^2 \\
 & \quad + \frac{1}{m(S_r)}  \kappa_+(r) m(S_{r}) (f(r) - f(r+1))^2 .
\end{align*}
For the second equality we used $ \kappa_+(r) m(S_r) =  \kappa_-(r+1) m(S_{r+1})$,  which follows from
$$\kappa_+(r) m(S_{r}) = \sum_{x \in S_r, y \in S_{r+1}} b(x,y) = \kappa_-(r+1) m(S_{r+1}).$$
This quantity  can be interpreted as the weighted size of the combinatorial boundary of $B_r = \{x \in X \mid d(x,o) \leq r\}$, which is why we write  $|\partial B_r| = \kappa_+(r) m(S_{r})$. Hence, the formula for the gradient reads
$$|\nabla  f|^2 (r)  = \frac{1}{m(S_r)} \left(|\partial B_{r-1}|(f(r) - f(r-1))^2 + |\partial B_r|(f(r) - f(r+1))^2 \right).$$

\begin{theorem}\label{theorem:weakly spherically symmetric}
 Let $b$  be a locally finite and weakly spherically symmetric graph over $(X,m)$ with root $o \in X$. Of the following assertions (i), (ii) and (iii) are equivalent,  (i)/(ii)/(iii) implies (iv) and (iv) implies (v).
 \begin{enumerate}[(i)]

  \item The graph is $\chi$-complete and there exists a sequence of cut-off functions which are radially symmetric.

 \item  There exists an intrinsic pseudo metric  with finite balls $\sigma$ such that the function $\sigma(o,\cdot)$ is radially symmetric.

  \item
  $$\sum_{r = 1}^\infty \frac{\sqrt{m(S_r) \wedge m(S_{r+1})}}{\sqrt{|\partial B_r|}}   = \infty.$$

    \item There exists an intrinsic metric with finite balls.

  \item $$\sum_{r = 1}^\infty \frac{\sqrt{m(S_r)}}{\sqrt{|\partial B_r|}} = \sum_{r = 1}^\infty \frac{1}{\sqrt{\kappa_+(r)}} = \infty.$$
 \end{enumerate}
 If, moreover, there exists $K > 0$ such that $m(S_r) \leq K m(S_{r+1})$ for all $r \geq 1$, then all assertions are equivalent.
 \end{theorem}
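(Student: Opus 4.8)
The plan is to establish the cycle (iii) $\Rightarrow$ (ii) $\Rightarrow$ (i) $\Rightarrow$ (iii), then (ii) $\Rightarrow$ (iv), then (iv) $\Rightarrow$ (v), and finally to close the loop with (v) $\Rightarrow$ (iii) under the growth hypothesis. All radial computations rest on the displayed gradient formula together with the identity $|\partial B_r| = \kappa_+(r) m(S_r) = \kappa_-(r+1) m(S_{r+1})$. A first useful bookkeeping remark is that this identity rewrites the summands as $1/\sqrt{\kappa_+(r)} = \sqrt{m(S_r)/|\partial B_r|}$, so the series in (v) dominates that in (iii) term by term, while $\frac{m(S_r)\wedge m(S_{r+1})}{|\partial B_r|} = \frac{1}{\kappa_+(r)\vee\kappa_-(r+1)}$, which is exactly the quantity that will govern radial increments.

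For (iii) $\Rightarrow$ (ii) I would construct a radial path pseudo metric by assigning the common length $\ell_r$ to every edge between $S_{r-1}$ and $S_r$ and length $0$ to edges inside a sphere. Weak spherical symmetry then gives $d_w(o,x) = \sum_{k=1}^r \ell_k =: \rho(r)$ for $x\in S_r$, and since $d_w(x,y)\le w(x,y)$ for $x\sim y$, the intrinsic inequality at a vertex of $S_r$ collapses to the single scalar condition $\ell_r^2\kappa_-(r) + \ell_{r+1}^2\kappa_+(r)\le 1$. Choosing $\ell_r = 2^{-1/2}(\kappa_-(r)\vee\kappa_+(r-1))^{-1/2}$ satisfies these, and $\sum_r \ell_r$ equals, up to the constant $2^{-1/2}$ and a reindexing, the series in (iii); hence $\rho(r)\to\infty$, i.e. $\sigma:=d_w$ has finite balls with $\sigma(o,\cdot)$ radial. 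For (ii) $\Rightarrow$ (i) I would set $\chi_n := \psi_n\circ\sigma(o,\cdot)$ with $\psi_n\colon[0,\infty)\to[0,1]$ equal to $1$ on $[0,n]$, to $0$ on $[2n,\infty)$ and $\tfrac1n$-Lipschitz: these are radial, lie in $C_c(X)$ because $\sigma$ has finite balls, converge to $1$, and satisfy $|\nabla\chi_n|\le \tfrac1n$ since $\sigma(o,\cdot)$ is $1$-Lipschitz and $\sigma$ is intrinsic. For (i) $\Rightarrow$ (iii) I argue by contraposition: writing $g_n(r)$ for the value of the radial cut-off $\chi_n$ on $S_r$, the bound $|\nabla\chi_n|\le C$ at levels $r$ and $r+1$ forces $|g_n(r)-g_n(r+1)|\le C(\kappa_+(r)\vee\kappa_-(r+1))^{-1/2}$; if the series in (iii) converged I could choose $N$ with $C\sum_{k\ge N}(\kappa_+(k)\vee\kappa_-(k+1))^{-1/2}<\tfrac12$, and then the finite support of $\chi_n$ plus telescoping would give $g_n(N)\le\tfrac12$ for all $n$, contradicting $g_n(N)\to 1$. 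The same increment bound applied to $\rho(r)=\sigma(o,\cdot)$ (now with $C=1$) gives (ii) $\Rightarrow$ (iii) directly, since $\rho(r)\to\infty$ forces divergence.

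The step (ii) $\Rightarrow$ (iv) upgrades the pseudo metric to a genuine metric: by Proposition~\ref{proposition:compactness} the set $\mathfrak M$ is convex, so $\tfrac12(\sigma+\tau)$ is intrinsic, where $\tau$ is the strictly positive intrinsic path metric of Example~\ref{example:path metrics}; it separates points (because $\tau$ does) and still has finite balls (because it dominates $\tfrac12\sigma$). The crucial step is (iv) $\Rightarrow$ (v), which must handle an arbitrary, possibly non-radial, intrinsic metric $\sigma$ with finite balls. Here I pass to $F=\sigma(o,\cdot)$, which satisfies $|\nabla F|\le 1$, and to its $m$-weighted spherical averages $\bar F(r)=m(S_r)^{-1}\sum_{x\in S_r}m(x)F(x)$; finiteness of balls gives $\min_{x\in S_r}F(x)\to\infty$, hence $\bar F(r)\to\infty$. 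The key computation is
$$\sum_{x\in S_r,\,y\in S_{r+1}} b(x,y)\big(F(y)-F(x)\big) = |\partial B_r|\,\big(\bar F(r+1)-\bar F(r)\big),$$
where the two boundary identities make the coefficients of $\bar F(r+1)$ and $\bar F(r)$ coincide. Applying Cauchy--Schwarz to the left-hand side, with $\sum_{x\in S_r,y\in S_{r+1}} b(x,y)=|\partial B_r|$ and $\sum_{x\in S_r,y\in S_{r+1}} b(x,y)(F(y)-F(x))^2\le \sum_{x\in S_r} m(x)|\nabla F|^2(x)\le m(S_r)$, yields $|\bar F(r+1)-\bar F(r)|\le \sqrt{m(S_r)/|\partial B_r|}=1/\sqrt{\kappa_+(r)}$, and telescoping this against $\bar F(r)\to\infty$ forces divergence of the series in (v).

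Finally, under the hypothesis $m(S_r)\le K m(S_{r+1})$ I close the loop via (v) $\Rightarrow$ (iii): the summand of (iii) equals that of (v) times $\sqrt{1\wedge m(S_{r+1})/m(S_r)}\ge (\max(K,1))^{-1/2}$, so the two series diverge simultaneously, and (iii) then implies all remaining assertions. I expect (iv) $\Rightarrow$ (v) to be the main obstacle: it is the only genuinely non-symmetric direction, and it hinges on choosing the right averaged test function so that the Cauchy--Schwarz estimate and the two boundary identities align to produce the clean increment bound $1/\sqrt{\kappa_+(r)}$. By contrast, the radial equivalences (i)--(iii) reduce to one-dimensional estimates on the increments of $g_n$ and $\rho$, and the passage (ii)~$\Rightarrow$~(iv) is a soft convexity argument.
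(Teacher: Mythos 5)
Your proof is correct, but it departs from the paper's in the two most substantive steps, and in both cases your route is more self-contained. For the cycle through (i)--(iii) the paper proves (iii) $\Rightarrow$ (i) by writing down explicit radial cut-off functions and then passes from (i) to (ii) by invoking the external construction of [LSW21, Theorem~A.1] (an $\ell^2$-superposition of cut-off functions); your radial path pseudo metric with edge lengths $\ell_r = 2^{-1/2}(\kappa_-(r)\vee\kappa_+(r-1))^{-1/2}$ lands on (ii) directly, avoids that citation, and is essentially dual to the paper's cut-off construction (your $\ell_r$ is, up to the constant, exactly the increment of their $\chi_n$). The genuine difference is in (iv) $\Rightarrow$ (v): the paper again routes through [LSW21] to extract cut-off functions $\chi_n$ from the metric and then radializes them by inductively selecting $x_r \in S_r$ so as to minimize $|\chi_n(y)-\chi_n(x_{r-1})|$ over neighbours $y$ of $x_{r-1}$ in $S_r$, which yields the one-sided bound $\kappa_+(r)(\varphi_n(r)-\varphi_n(r+1))^2 \leq C$. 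You instead work directly with $F=\sigma(o,\cdot)$ and its $m$-weighted spherical averages $\bar F$, and the identity $\sum_{x\in S_r,\,y\in S_{r+1}} b(x,y)(F(y)-F(x)) = |\partial B_r|\bigl(\bar F(r+1)-\bar F(r)\bigr)$ (which is where weak spherical symmetry enters, letting $\kappa_\pm$ be pulled out of the sphere sums) combined with Cauchy--Schwarz gives the same increment bound $|\bar F(r+1)-\bar F(r)|\leq 1/\sqrt{\kappa_+(r)}$ in one stroke; this is cleaner, needs no external input beyond the Rademacher property $|\nabla F|\leq 1$, and makes transparent why only the outward coefficient $\kappa_+$ survives in (v). You also correctly observe that (ii) $\Rightarrow$ (iv) is not literally trivial --- the pseudo metric produced in (ii) may vanish inside spheres --- and repair it by averaging with a strictly positive intrinsic path metric using convexity of $\mathfrak M$, whereas the paper dismisses this step with one word. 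The remaining estimates (the intrinsic condition $\ell_r^2\kappa_-(r)+\ell_{r+1}^2\kappa_+(r)\leq 1$, the contrapositive telescoping in (i) $\Rightarrow$ (iii), and the term-by-term comparison of the series under $m(S_r)\leq Km(S_{r+1})$) all check out.
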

\begin{proof}

(i) $\Rightarrow$ (ii): That the existence of a sequence of cut-off functions yields an intrinsic pseudometric with finite balls was shown in \cite[Theorem~A.1]{LSW21}. Here we discuss why the  pseudometric  constructed  in the proof of \cite[Theorem~A.1]{LSW21} is radially symmetric.

Let $(\psi_n)$ be a sequence of cut-off functions which are radially symmetric. As seen in the proof of the implication (iii) $\Rightarrow$ (i)  of \cite[Theorem~A.1]{LSW21}, there is a sequence of cut-off functions  $(\varphi_n)$ with the following properties:
\begin{itemize}
 \item Every  $\varphi_n$ is a  convex combination of finitely many of the functions $\psi_n, n \in \N$.
 \item $ |\nabla \varphi_n| \to 0$ uniformly, as $n \to \infty$.
\end{itemize}
The first property implies that $\varphi_n$ is radially symmetric and the second  yields that we can assume without loss of generality (after possibly passing to a suitable subsequence)
$$\sum_{n = 1}^\infty \av{ |\nabla \varphi_n|}_\infty \leq 1. $$
Moreover, since $\varphi_n \to 1$ pointwise, we can also assume (after passing to another subsequence)
$$\sum_{n = 1}^\infty (1 - \varphi_n(x))^2 < \infty$$
for all $x \in X$.  Under these conditions it is shown in the proof of \cite[Theorem~A.1]{LSW21} that $\sigma \colon X \times X \to [0,\infty)$ given by
$$\sigma(x,y)  = \left(\sum_{n = 1}^\infty (\varphi_n(x) - \varphi_n(y))^2\right)^{1/2}  $$
is an intrinsic metric with finite balls. Clearly, the function $\sigma(o,\cdot)$ is radially symmetric because the functions $\varphi_n, n \in \N,$ are radially symmetric.

(ii) $\Rightarrow$ (i):  The functions
$$\chi_n \colon X \to [0,\infty),\quad  \chi_n(x) = (1 - \sigma(o,x)/n)_+$$
are radially symmetric and satisfy $|\nabla \chi_n| \leq 1/n$ because $\sigma$ is intrinsic,
see e.g. \cite[Proposition~11.29]{KLW21}. Their support is contained in the ball $B^\sigma_n(o)$, which is finite by assumption, and they satisfy $\psi_n \to 1$ pointwise.

(i) $\Rightarrow$ (iii): Let $(\psi_n)$ be a sequence of cut-off functions which are radially symmetric. Then there exists $C \geq 0$ such that for all $r,n \in \N$ we have
$$\kappa_-(r) (\psi_n(r) - \psi_n(r-1))^2 + \kappa_+(r) (\psi_n(r) - \psi_n(r+1))^2 = |\nabla \psi_n|^2(r) \leq C. $$
This implies
$$(\psi_n(r) - \psi_n(r+1))^2 \leq C \left( \frac{1}{\kappa_+(r)} \wedge \frac{1}{\kappa_-(r+1)} \right) = C \frac{ m(S_r) \wedge m(S_{r+1})}{ |\partial B_r|}.$$
For a given $R \geq 1$ we choose $n \in \N$ large enough such that $\psi_n(R) \geq 1/2$. Since $\psi_n$ has finite support, there is $l\geq 1$ such that $\psi_n(r) = 0$ for $r \geq R + l$. These properties of $\psi_n$ yield
 \begin{align*}
  \frac{1}{2} &\leq  |\psi_n(R) - \psi_n(R + l)| \leq \sum_{r = R}^{R + l-1} |\psi_n(r) - \psi_n(r+1)|\\
  &\leq \sqrt{C} \sum_{r = R}^\infty \frac{\sqrt{m(S_r) \wedge m(S_{r+1})}}{\sqrt{|\partial B_r|}}.
 \end{align*}
 Since $R$ was arbitrary, we arrive at (ii).

 (iii) $\Rightarrow$ (i): We consider radially symmetric functions $\chi_n$ defined by $\chi_n(o) = 1$ and, for $r \geq 1$,
 $$\chi_n(r) =  \left(1 - \sum_{k = 0}^{r-1} 1_{[n,\infty)}(k) \frac{\sqrt{m(S_k) \wedge m(S_{k+1})}}{\sqrt{|\partial B_k|}} \right)_+.
$$
Clearly, $\chi_n \to 1$ pointwise and
\begin{align*}
 |\chi_n(r) - \chi_n(r+1)|^2 & \leq \frac{ m(S_r) \wedge m(S_{r+1})}{ |\partial B_r|}.
\end{align*}
The latter inequality and our expression for $|\nabla \chi_n|^2$ in the radially symmetric case yield $|\nabla \chi_n|^2 \leq 2$.   The assumption (ii) implies that $\chi_n$ has compact support and we arrive at (i).

(ii) $\Rightarrow$ (iv): This is trivial

 (iv) $\Rightarrow$ (v):  As discussed in \cite[Theorem~A.1]{LSW21} the existence of an intrinsic pseudo metric with finite balls yields a sequence of cut-off functions  $(\chi_n)$ and we assume $0 \leq \chi_n \leq 1$. We consider radially symmetric functions $\varphi_n$ defined as follows. First we inductively choose a sequence $(x_r)$ with  $x_0 = o$ and   $x_r \in S_r$ such that
 $$|\chi_n(x_r) - \chi_n(x_{r-1})| = \min \{|\chi_n(y) - \chi_n(x_{r-1})| \mid y \sim x_{r-1}, y \in S_r\},  $$
 and  then we set $\varphi_n(r) = \chi_n(x_r)$. Clearly, $\varphi_n$ has finite support and $\varphi_n \to 1$ pointwise.  Moreover, for $r \geq 1$ we obtain by our choice of $x_r$
\begin{align*}
  \kappa_+(r)  (\varphi_n(r) -  \varphi_n(r+1))^2 &= \frac{1}{m(x_r)} \sum_{y \in S_{r+1}}b(x,y) (\chi_n(x_r) -  \chi_n(x_{r+1}))^2 \\
  &\leq \frac{1}{m(x_r)} \sum_{y \in S_{r+1}}b(x,y) (\chi_n(x_r) -  \chi_n(y))^2\\
  & \leq |\nabla \chi_n|^2(x_r) \leq C.
\end{align*}
This implies
 $$(\varphi_n(r) - \varphi_n(r+1))^2 \leq C \frac{1}{\kappa_+(r)} =  C \frac{m(S_r)}{|\partial B_r|}. $$
With this at hand we can argue as in the proof of (i) $\Rightarrow$ (ii) to conclude (iv).

If we assume $m(S_r) \leq Km(S_{r+1})$ for all $r \geq 0$, the  equivalence of  (ii) and (v) is obvious.
\end{proof}

\begin{remark}
% The need for the extra assumption $m(S_r) \leq K m(S_{r+1})$ comes from the fact that in order to proof (i) $\Rightarrow$ (ii) one  needs the existence of a sequence of radially symmetric cut-off functions $(\chi_n)$. In this case,
% %
% $$\Gamma(\chi_n)(r) = \kappa_+(r) ( \chi_n(r) - \chi_n(r+1))^2 + \kappa_-(r) ( \chi_n(r) - \chi_n(r-1))^2 \leq C$$
% %
% leads to
% %
% $$  (\chi_n(r) - \chi_n(r+1))^2 \leq  \frac{ m(S_r) \wedge m(S_{r+1})}{ |\partial B_r|}.$$
% %
% With this inequality at hand, we could deduce (ii) from (i)  by replacing  $\varphi_n$ with $\chi_n$ in the proof of (i) $\Rightarrow$ (iii). Hence, (ii) is equivalent to the existence of radially symmetric cut-off functions.
%
It remains open whether on any weakly spherically symmetric $\chi$-complete graph the sequence of cut-off functions can be chosen to be radially symmetric. Note that the sequence $(\varphi_n)$ constructed in the proof of (iv) $\Rightarrow$ (v) need not be a sequence of cut-off functions. They satisfy
$$\kappa_+(r) (\varphi_n(r) - \varphi_n(r+1))^2 \leq C$$
but only the weaker bound
\begin{align*}
 \kappa_-(r) (\varphi_n(r) - \varphi_n(r-1))^2 &= \frac{m(S_{r-1})}{m(S_r)} \kappa_+(r-1) (\varphi_n(r) - \varphi_n(r-1))^2 \\
 &\leq C \frac{m(S_{r-1})}{m(S_r)},
\end{align*}
which leads to
$$|\nabla \varphi_n|^2(r) \leq C\left(1 + \frac{m(S_{r-1})}{m(S_r)}\right).$$
Hence, $(\varphi_n)$ is  a sequence of cut-off functions if there exists $K > 0$ such that $m(S_r) \leq Km(S_{r+1})$ for all $r \geq 0$, which is the additional condition in the theorem.
% This is certainly the case if the automorphism group of the graph (which should preserve $b$ and $m$) acts transitively on spheres. In this case, spherical averages can be obtained using convex combinations. In general though the spherical averages $(\tilde \chi_n)$ of a cut-off sequence $(\chi_n)$ do not seem to satisfy $\Gamma(\tilde \chi_n) \leq C$ for some absolute constant $C > 0$, but I do not have a concrete counterexample.
%
\end{remark}

\begin{remark}
 The idea for the proof of implication (iii) $\Rightarrow$ (i) is taken from the proof of \cite[Theorem~3.20]{BGJ19}.
\end{remark}

 Let $D \subseteq [0,\infty)$. For functions $f,g \colon D \to (0,\infty)$ we write $f\sim g$ if there exists $C > 0$ such that $C^{-1} f(t) \leq g(t) \leq C f(t)$ for all sufficiently large $t \in D$. If $m$ satisfies  $m(S_\cdot) \sim f$  for some monotone increasing function $f \colon  \N \to (0,\infty)$, then there exists $K \geq 0$ such that $m(S_r) \leq K m(S_{r+1})$ for all  $r \in \N$.  In this case, all assertions of Theorem~\ref{theorem:weakly spherically symmetric} are equivalent.

\begin{example}[Trees]
Let $m = 1$ be the counting measure on $X$ and let be an infinite radially symmetric connected tree with $b \in \{0,1\}$. Then  $m(S_n) = |S_n|$ and since any $y \in S_{r+1}$ has exactly one neighbor in $S_r$ (this is the defining property of a connected tree), we have $|\partial B_r| = |S_{r+1}|$ and $|S_r| \leq |S_{r+1}|$. For the inequality  we  used that the graph is radially symmetric and infinite.

 Thus, Theorem~\ref{theorem:weakly spherically symmetric} shows that the graph has an intrinsic metric with finite balls if and only if
 $$\sum_{r = 1}^\infty\frac{\sqrt{|S_r|}}{\sqrt{|S_{r+1}|}} = \infty.$$
 This criterion for $\chi$-completeness was obtained in \cite[Proposition~3.16]{BGJ19}. It shows that if $|S_r|$ grows polynomially in $r$, then the graph has an intrinsic pseudo metric with finite balls. If there exists $\alpha > 0$ such that $|S_r| \sim e^{r^\alpha}$, then the graph has an intrinsic pseudo metric with finite balls if and only if $\alpha \leq 1$.
\end{example}

 \begin{example}[Antitrees]\label{exa:antitrees}
 Let $X$ be infinite,  let $m = 1$ be the counting measure and fix $o \in X$. A connected graph $b$ over $(X,m)$ is called {\em radially symmetric antitree} if every vertex from $S_n$ is connected to every vertex from $S_{n + 1}$ and there are no neighbors within $S_n$. We assume that $b$ is such an antitree with $b(x,y) \in \{0,1\}$ for all $x,y \in X$.  Then $m(S_n) = |S_n|$ and since every  $y \in S_{r+1}$ is connected to all $x \in S_r$, we have $|\partial B_r| = |S_r| |S_{r+1}|$. Hence,
 $$\sum_{r = 1}^\infty \frac{1}{\sqrt{|S_r| \vee  |S_{r+1}|}}  = \infty.$$
 implies that the graph has an intrinsic metric with finite balls. For $\chi$-completeness this condition was  observed in \cite[Proposition~3.22]{BGJ19}.  Moreover, the existence of an intrinsic pseudo metric with finite balls yields
 $$\sum_{r = 1}^\infty \frac{1}{\sqrt{|S_r|}}  = \infty.$$
 If there exists $\alpha > 0$ such that $|S_r| \sim r^\alpha$, then the antitree has an intrinsic pseudo metric with finite balls if and only if $\alpha \leq 2$.  Here the 'only if' part seems to be new.
\end{example}

\bibliographystyle{plain}

\bibliography{literatur}

\end{document}